\newtheorem{thm}{Theorem}[section]
\newtheorem{defn}[thm]{Definition}
\newtheorem{remark}[thm]{Remark}
\newcommand{\eee}{\mathbb{E}}
\newcommand{\mf}{\mathcal{F}}
\newcommand{\pbm}{\mathbb{P}}
\newcommand{\rn}{\mathbb{R}}
\newcommand{\p}{\mathcal{P}}
\newcommand{\bfxi}{\boldsymbol{\xi}}
\newcommand{\bfx}{\mathbf{x}}
\begin{document}
\title{MFGs for Partially Reversible Investment\\
{\em \large In memory of Larry Shepp, our mentor \& friend}}
\author{Haoyang Cao\thanks{Department of Industrial Engineering and Operations Research, University of California, Berkeley. Email: \{hycao, xinguo\}@berkeley.edu}
 \and Xin Guo\footnotemark[1] 
 }
\date{\today}
\maketitle

\begin{abstract}
This paper analyzes a class of infinite-time-horizon stochastic games with singular controls motivated from the partially reversible problem.
It provides an explicit solution for the mean-field game (MFG), and presents sensitivity analysis to compare the solution for the MFG with that for the single-agent control problem. It shows that in the MFG, model parameters not only affect the optimal strategies as in the single-agent case, but also influence the equilibrium price.  It then establishes that the solution to the MFG is 
an $\epsilon$-Nash Equilibrium to the corresponding $N$-player game, with $\epsilon=O\left(\frac{1}{\sqrt N}\right)$.
 
\end{abstract}
\section{Introduction}
\label{introduction}

The seminal work on fuel follower problem and its variants by Bene{\v{s}}, Shepp, and Witsenhausen \cite{Benes1980}  is one of the landmarks in the  
theoretical development  of singular controls (see also \cite{BC67}). The simple
and insightful solution structures have inspired many follow-up works in stochastic controls. See, for instance,
\cite{BS92}, \cite{EK88},  \cite{HW1987},  \cite{Karatzas83}, \cite{SS1989}, and \cite{davis1994problem}.  Such problems  have had a wide range of
applications, including economics and finance \cite{davis1990portfolio}, \cite{Shreve1994}, \cite{Shepp1995}, \cite{jack2008singular}, \cite{Merhi2008} and \cite{Steg2012}, operations research \cite{Guo2011}, and queuing theory \cite{VW1985} and \cite{AB2006}.

Recently, the pioneering works of  \cite{Lasry2007} and \cite{Huang2006} on mean-field games (MFGs) provide an ingenious aggregation approach for analyzing the otherwise notoriously hard $N$-player stochastic games, and have motivated exponentially growing research interests in both theory and applications. See for instance  \cite{Bensoussan2013}, \cite{Carmonaa}, \cite{Carmona2018}, \cite{Gueant2011}, \cite{Lasry2007}, and the references therein. However, a majority of the theoretical developments in MFGs are within the framework of regular controls which are absolutely continuous, with few exceptions, including \cite{Lacker2015} which  formulates a controlled martingale problem to  establish the solution for  a more general class of MFGs with possibly discontinuous controls.

Compared to regular controls, singular controls that are allowed to be discontinuous provide a more general mathematical framework. Though more natural for practical engineering and economics problems, singular controls are more challenging, especially for deriving explicit solutions: studying singular control problems involves analyzing additional (possibly state-dependent) gradient constraints to the underlying Hamilton-Jacobi-Bellman (HJB) equation. Moreover, the Hamiltonian for singular controls of the finite variation type diverges and the standard stochastic maximal principle fails. 

To overcome these technical difficulties for MFGs with singular controls, 
\cite{Fu2017} adopts the notion of relaxed controls and the techniques developed in \cite{Lacker2015} to prove the existence of solutions to MFGs with singular controls in a finite-time horizon, with approximation analysis from MFGs with purely regular controls.  Under the finite-time horizon setting, \cite{Guo2018b} establishes an $\epsilon$-Nash Equilibrium ($\epsilon$-NE) approximation of N-player games with singular controls of finite variation by MFGs with singular controls of bounded velocity.

Still, very little is known on the solution structure of MFGs with singular controls, except for the recent work of  \cite{GX2018}. They study MFGs of fuel follower problem and derive explicit solutions by exploiting symmetric structure in the cost functional. However,  due to  this symmetry, the optimal strategy for the MFG in \cite{GX2018} coincides with that for the single-agent control problem, i.e., the fuel follower problem in \cite{Benes1980}, with no demonstrated game effect. 

Indeed, there are essential  technical difficulties for deriving explicit solutions without certain symmetry structures in MFGs with singular controls. For instance, for a non-stationary MFG, the time-dependent mean information process  leads to a parabolic HJB equation instead of an elliptic type, even in an infinite-time horizon game. This is different from classical control problems with infinite-time horizon. Moreover, the probabilistic approach of forward-backward stochastic differential equations (FBSDEs) does not work easily for the infinite-time horizon case.

\paragraph{Our work.}

In this paper, we analyze a class of infinite-time-horizon MFGs with singular controls, without symmetric cost structures. 
We take the partially reversible investment model in \cite{Guo2005}, formulate its MFG counterpart, provide an analytical solution to the MFG and study the difference between this MFG with its corresponding single-agent control problem, as well as its relation with the associated $N$-player game.

More specifically, the control problem in \cite{Guo2005} is formulated for a class of real option problems originated in the classical work of \cite{Dixit1994}. It is an optimization problem for a company whose revenue is based on the production level of a certain commodity, modeled by a geometric Brownian motion. The company can decrease its production level with a savage value and increase its production level with an investment cost, hence the term ``partially reversible investment''. That is, the dynamics of the production level at time $t$ is given by
\[dx_t = x_t(\delta dt + \gamma dW_t)+d\xi_t,\quad x_{0-}\sim\mu_0,\] 
where $\mu_0\in\mathcal P^2(\rn)$, and the control $\xi_t$ representing the cumulative change in the production level by time $t$ is singular. The problem is to find an optimal investment strategy $\xi_t$ over an appropriate control set in order to maximize its overall expected net profit
\[\eee\left[\int_0^\infty e^{-rt}[\Pi(x_t)dt-\gamma^+ d\xi^+_t -\gamma^- d\xi^-_t]\right].\]
Here, the discount rate $r>0$, $\Pi(\cdot)$ the  revenue function satisfies the usual Inada condition for utility functions, $\gamma^+$ and $\gamma^-$ are the unit costs of increasing and decreasing the production level respectively, subject to some technical conditions for the well-posedness of the  problem.

In the MFG framework, instead of one company, we consider a continuum of infinitely many indistinguishable companies reacting to the market. We assume that the revenue function $f$ is affected by the aggregated production level made by all the companies on the market, i.e., the game interaction among companies is through  the revenue function $f$. 
We analyze explicitly this MFG, and compare  in details this MFG and the single-agent control problem when the revenue function is of the Cobb-Douglas type. In particular, we show that model parameters 
in the MFG impact both the optimal strategies (as in the single-agent case), and the equilibrium price. 
We then formulate the corresponding $N$-player game, and establish that this MFG solution is an approximation to the $N$-player game in the $\epsilon$-NE sense, with $\epsilon=O\left(\frac{1}{\sqrt N}\right)$.

\paragraph{Impact of mean-field interaction via explicit solution.} 
There are earlier works on explicitly solving MFGs with regular controls and on analysis of game effect.  For instance, \cite{Carmona2015} studies the systemic risk characterizing interaction among banks with common noise. It shows that the mean-field interaction creates stability quantified by the systemic risk. \cite{Lacker2018} shows that heterogeneity among players and the common noise have significant impact on the solution structure of MFGs. In particular, without common noise or the heterogeneity, the mean-field interaction would be factored out of the optimization problem of individual players and the equilibrium strategy in MFGs solution would be similar to the single-agent control problem case.

\paragraph{Outline of the paper.} Section \ref{setup} reviews the classical partially reversible investment problem and formulates mathematically the corresponding MFG; Section \ref{numerics} presents a full derivation of an explicit solution to the MFG, provides sensitivity analysis with respect to model parameters, and compares the MFG with the single-agent control problem; Section \ref{sec:approx} connects this MFG with  the associated $N$-player game.

\section{Problem formulation}
\label{setup}
\subsection{Preliminary: partially reversible investment problem}\label{ssec:ctrl}

The basic idea of the partially reversible investment problem goes as follows. A company profits from producing and selling  a commodity.  The revenue function depends on the production level with fluctuations according to, for instance, the market demand. The company has the flexibility to adjust its production level at any time, with the expansion incurring a cost and the contraction bringing a smaller salvage value. The objective of the company is to choose an optimal investment strategy in terms of its production level to maximize the overall expected net profits.

In \cite{Guo2005}, this partially reversible investment problem is formulated  as follows. 
Take a filtered probability space $(\Omega, \mf,\mathbb F=\{\mf_t\}_{t\geq0}, \p)$ supporting a standard Brownian motion $W=\{W_t\}_{t\geq0}$. Assume that $\mathbb F^W$ is the augmented filtration generated by $W$ that satisfies the usual condition. The production level of a company $x_t$ at time $t$ is characterized by a geometric Brownian motion with an initial distribution $\mu_0\in\mathcal P^2(\rn)$ such that
\[dx_t=x_t(\delta dt+\gamma dW_t),\quad x_{0}\sim\mu_0,\]
where $\delta,\gamma>0$ are  drift and volatility coefficients, representing respectively the average and fluctuation in market demand. 
The production level can be adjusted at any time $t$, and possibly in a discontinuous fashion such that 
\begin{equation}
\label{equation:dynamics}
dx_t = x_t(\delta dt + \gamma dW_t) + d\xi_t,\quad x_{0-}\sim\mu_0,\quad \xi_{0-}=0.
\end{equation}
Here, $\xi_t=\xi^+_t-\xi^-_t$, $\xi^\pm_{0-}=0$ with $\xi^{+}_\cdot$ and $\xi^{-}_\cdot$ adapted and nondecreasing c\'adl\'ag processes  representing  the accumulated increased and decreased production level by time $t$ respectively. (Note that when the control is of finite variation, such decomposition of $\xi_\cdot$ by   $\xi^{+}_\cdot$ and $\xi^{-}_\cdot$ is unique).

The objective of the company is to adjust its production level $x_t$ according to a policy $\xi_\cdot = (\xi^+_\cdot,\xi^-_\cdot)$ chosen from an appropriate admissible control set $\mathcal{U}$, in order to maximize its discounted expected total profit over an infinite-time horizon. That is to find
\begin{equation}\label{eq:ctrl}
v(x)=\sup_{(\xi^+_\cdot,\xi^-_\cdot)\in\mathcal{U}}\eee\left[\int_0^\infty e^{-rt}[\Pi(x_t)dt-\gamma^+ d\xi^+_t -\gamma^- d\xi^-_t]\biggl|x_{0-}=x\right], \quad \forall x>0.
\end{equation}
Here the discount rate $r>0$, $\Pi(\cdot)$ the revenue function satisfies the standard Inada condition for utility functions,  $\gamma^+=p>0$ is the unit investment cost to increase  production level, and $-\gamma^-=p(1-\lambda)$ is the unit gain for reducing  production level, with  $\lambda\in(0,1)$  to ensure no-arbitrage.

Finally, the admissible control set $\mathcal U$ is
\begin{equation}\label{eq:ad-ctrl-set-1}
\begin{aligned}
\mathcal{U}=&\left\{(\xi^+_\cdot,\xi^-_\cdot):\xi^+_\cdot,\, \xi^-_\cdot\text{ nondecreasing c\`adl\`ag processes adapted to }\mathbb F^W,\right.\\
&\hspace{50pt}\left.\xi^+_{0-}=\xi^-_{0-}=0,\, \eee\left[\int_0^\infty e^{-rt}d\xi^+_t\right]<\infty, x_t\ge 0. \right\}
\end{aligned}
\end{equation}

In \cite{Guo2005}, the  smooth fit principle in the sense of \cite{Benes1980} is established via regularity analysis for the value function, and the optimal control 
$\xi^*_\cdot=(\xi^{*,+}_\cdot,\xi^{*,-}_\cdot)$
to \eqref{eq:ctrl} is shown to be of bang-bang type. Moreover, the value function is shown to be the  unique classical $\mathcal C^2$ solution to the following Hamilton-Jacobian-Bellman (HJB) equation,
\begin{equation}
0= \min\{r{v} - \Pi(x) -\delta x {v'} -\frac{1}{2}\gamma^2 x^2  {v''}, p-{v'},{v'} -p(1-\lambda) \} ,
\label{hjb-ctrl}
\end{equation}
where $v'$ and $v''$ denote the first and second order derivatives of $v$ respectively. 

When the revenue function $\Pi(x)$ is of the Cobb-Douglas type, i.e., $\Pi(x)=c\rho x^\alpha$ with constants $\rho>0$, $c>0$ and $\alpha\in(0,1)$, then the optimal control is characterized by two thresholds $0<x_b<x_s<\infty$, which are explicitly given by
\begin{equation*}
\begin{cases}
x_b=\left\{\frac{2c\alpha(y_0^n-y_0^\alpha)}{\gamma^2p(1-m)(n-\alpha)\left[y_0^n-(1-\lambda)y_0\right]}\right\}^{\frac{1}{1-\alpha}}\rho^{\frac{1}{1-\alpha}},\\
x_s=\left\{\frac{2c\alpha y_0^{1-\alpha}(y_0^n-y_0^\alpha)}{\gamma^2p(1-m)(n-\alpha)\left[y_0^n-(1-\lambda)y_0\right]}\right\}^{\frac{1}{1-\alpha}}\rho^{\frac{1}{1-\alpha}},
\end{cases}
\end{equation*}
where
\[m=-\left(\frac{\delta}{\gamma^2}-\frac{1}{2}\right)-\sqrt{\left(\frac{\delta}{\gamma^2}-\frac{1}{2}\right)^2+\frac{2r}{\gamma^2}},\,n=-\left(\frac{\delta}{\gamma^2}-\frac{1}{2}\right)+\sqrt{\left(\frac{\delta}{\gamma^2}-\frac{1}{2}\right)^2+\frac{2r}{\gamma^2}},\]
and $y_0>1$ is a root of the following equation
\[1-\lambda = \frac{(n-1)(\alpha-m)y^{m-1}(y^\alpha-y^n)+(1-m)(n-\alpha)y^{n-1}(y^m-y^\alpha)}{(n-1)(\alpha-m)(y^\alpha-y^n)+(1-m)(n-\alpha)(y^m-y^\alpha)}.\]
The value function is then derived by solving the following QVI via the smooth fit principle,
\begin{equation*}
\begin{cases}
p-{v'}=0,&x\leq x<x_b,\\
r{v} - c\rho x^\alpha -\delta x {v'} -\frac{1}{2}\gamma^2 x^2 {v''}=0,& x_b\leq x\leq x_s,\\
{v'} -p(1-\lambda)=0,&x>x_s.
\end{cases}
\end{equation*}
This bang-bang type of control, that is, the optimal control $\xi^*_\cdot$ characterized a pair of threshold $(x_b,x_s)$, suggests that the company should spend the minimum effort to keep its production level within the interval $[x_b,x_s]$.

\subsection{Formulation of MFG}
It is natural to consider the game version of this partially reversible investment problem. We will first consider an MFG in which
there are infinite number of rational and indistinguishable companies, and derive an explicit solution to this MFG. We
will then compare this (much simpler)
MFG with the single-agent problem (in Section \ref{ssec: comparison}), and study its relation with the corresponding $N$-player game (in Section \ref{sec:approx}).

Let $(\Omega, \mf,\mathbb F=\{\mf_t\}_{t\geq0}, \p)$ be a filtered probability space supporting a standard Brownian motion $W=\{W_t\}_{t\geq0}$. Assume that $\mathbb F^W$ is the augmented filtration generated by $W$ that satisfies the usual condition. As in the single-agent control problem, in the MFG a representative company adjusts its production level $x_t$ according to a policy chosen from the admissible control set $\mathcal{U}$ defined in \eqref{eq:ad-ctrl-set-1} to maximize its discounted total profit over an infinite-time horizon,
\begin{equation}\tag{MFG}\label{example2}
\sup_{(\xi^+_\cdot,\xi^-_\cdot)\in\mathcal{U}}\eee\left[\int_0^\infty e^{-rt}[f (x_t,\mu)dt-p d\xi^+_t +p(1-\lambda)d\xi^-_t]\biggl|x_{0-}=x\right], \quad \forall x>0,
\end{equation}
subject to
\begin{equation}\label{eq:ctrl-dyn}
dx_t = x_t(\delta dt + \gamma dW_t) + d\xi^+_t- d\xi^-_t,\quad x_{0-}\sim\mu_0.\end{equation}

Unlike the single agent problem, the revenue function for a representative company in this game \eqref{example2} depends on {\it both} its own production level $x$ and the aggregation of all other companies, denoted by a probability distribution $\mu$. More precisely, $f(x,\mu)$ the revenue function  of a Cobb-Douglas type takes the form of $f(x,\mu)=F(\mu) x^{\alpha}$ for some $\alpha\in (0,1)$, with $\mu$ being the distribution of the production level in the long run, i.e., $\mu=Law(x_\infty)$.
If we consider the inverse demand function, then the price will be given by 
\[\rho = \rho(\mu)=\eee_{X\sim\mu}[\tilde\rho(X)]=\int (a_0 -a_1 y^{1-\alpha}) \mu (dy),\]
and $F(\mu)=c\rho(\mu)$. Effectively one can write 
\[f(x,\mu)=c\rho x^\alpha.\]
Note that in this MFG companies interact through the revenue function $f$. It is also worth noting that, unlike the revenue function for the single-agent control problem in Section \ref{ssec:ctrl} where the unit price $\rho$ is exogenously given and fixed, here in the game \eqref{example2} $\rho$ is endogenously determined.

We will look for a solution to the \eqref{example2} in the following sense.
\begin{defn}\label{soln-smfg}
If there exists a control $\xi^*_\cdot = (\xi^{+,*}_\cdot,\xi^{-,*}_\cdot)\in\mathcal{U}$ and $\rho^*>0$ such that
\begin{enumerate}
\item Under $\rho^*$, $\xi^*_\cdot$ is an optimal control for
\begin{equation}
\tilde{v}(x) = \sup_{(\xi^+_\cdot,\xi^-_\cdot) \in \mathcal{U}} \eee_{\mu_0}\left[\int_0^\infty e^{-rt}[c\rho^*x_t^\alpha dt-p d\xi^+_t + p(1-\lambda)d\xi^-_t]\biggl|x_{0-}=x\right], \, \forall x>0,
\end{equation}
subject to \eqref{eq:ctrl-dyn}.
\item Under $\xi^*_\cdot$ the controlled process $x^*=\{x^*_t\}_{t\geq0}$ given by
\begin{equation}
dx^*_t=x_t^*(\delta dt+\gamma dW_t)+d\xi^{+,*}_t-d\xi^{-,*}_t,\quad x^{*}_{0-}\sim\mu_0
\end{equation}
admits a limiting distribution $\pbm_{x_\infty^*}$ and $\rho^*=\int (a_0 -a_1 y^{1-\alpha}) \pbm_{x_\infty^*} (dy)$.
\end{enumerate}
then the control-mean pair $(\xi^{*}_\cdot,\rho^*)$ is said to be an NE solution to the game \eqref{example2}.

\end{defn}
To ensure the well-posedness of \eqref{example2}, we assume $2\delta+\gamma^2<r$ and $\frac{2\delta}{\gamma^2}\not\in\{\alpha, 1\}$.

\begin{remark}
There is an alternative and equivalent definition of the solution to the game \eqref{example2} . That is, for any fixed $\rho\in\rn$, we may define
\[
\tilde v(\mu_0) = \sup_{(\xi^+,\xi^-)\in\mathcal U}\eee_{\mu_0}\left[\int_0^\infty e^{-rt}\left[c\rho x_t^{\alpha}dt-pd\xi_t^++p(1-\lambda)d\xi^-_t\right]\right]
\]
subject to \eqref{eq:ctrl-dyn}. Then these two solutions are equivalent in the sense that $\tilde v(\mu_0)=\eee_{\mu_0}\left[\tilde v(x_{0-})\right]$.
\end{remark}

\section{Explicit solution to MFG} 
\label{numerics}
\subsection{Solution to the game $\bm{\eqref{example2}}$.}\label{ssec:exp-soln}
We shall now solve the game \eqref{example2}, with the fixed-point approach as in \cite{Lasry2007}.
\paragraph{Step 1. Control problem under fixed mean information.} Fix a $\rho>0$, then the game \eqref{example2} is
a singular control problem, 
\begin{equation}\label{eq:mfg-ctrl}\tag{Control}
\tilde v(x)=\sup_{(\xi^+_\cdot,\xi^-_\cdot)\in\mathcal{U}}\eee\left[\int_0^\infty e^{-rt}[c\rho x^\alpha_tdt-p d\xi^+_t +p(1-\lambda) d\xi^-_t]\biggl|x_{0-}=x\right],\quad x>0,
\end{equation}
subject to \eqref{eq:ctrl-dyn}. The dynamic programming principle leads to the following HJB equation associated with the problem \eqref{eq:mfg-ctrl} under the fixed $\rho$,
\begin{equation}
0= \min\{r\tilde{v} - c x^\alpha \rho -\delta x \partial_x \tilde{v} -\frac{1}{2}\gamma^2 x^2 \partial_{xx} \tilde{v}, p-\partial_x \tilde{v},\partial_x \tilde{v} -p(1-\lambda) \} .
\label{hjbbv}
\end{equation}
Similar to the argument in \cite{Guo2005}, we see that the optimal policy is a bang-bang type and is characterized by an expansion threshold $\tilde x_b$ and a contraction threshold $\tilde x_s$ so that $x_t\in[\tilde x_b,\tilde x_s]$ almost surely. 

More precisely, at time $t=0$, if $x\in(0,\tilde x_b)$, then $\xi^+_0=\tilde x_b-x$ and $\xi^-_0=0$; if $x\in(\tilde x_s,\infty)$, then $\xi^+_0=0$ and $\xi^-_0=x-\tilde x_s$. Note that $x_0=x_{0-}+\xi^+_0-\xi^-_0\in[\tilde x_b,\tilde x_s]$. For $t>0$, it is optimal to impose a minimum amount of adjustment so that $x_t\in[\tilde x_b,\tilde x_s]$. 

Accordingly, the solution $\tilde{v}$ is of the form
\begin{align*}
\tilde{v}(x) = \left\{
\begin{array}{l l} 
 px+C_1, & 0\leq x \leq \tilde{x}_b,\\
 A x^m+Bx^n+Hx^\alpha, & \tilde{x}_b < x <\tilde{x}_s,\\
 p(1-\lambda)x+C_2, & \tilde{x}_s \leq x,
\end{array}\right.
\end{align*}
where $\tilde{x}_b = \inf \{ x : \partial_x \tilde{v}(x) = p\}$, $\tilde{x}_s = \sup \{ x : \partial_x \tilde{v}(x) = p(1-\lambda)\}$ with $0<\tilde x_b\leq\tilde x_s$ (see Lemma 4.4 in \cite{Guo2005}), and since it is assumed that $2\delta+\gamma^2<r$ and hence $\delta<r$,
\[\begin{aligned}
&m=-\left(\frac{\delta}{\gamma^2}-\frac{1}{2}\right)-\sqrt{\left(\frac{\delta}{\gamma^2}-\frac{1}{2}\right)^2+\frac{2r}{\gamma^2}}<0,\,n=-\left(\frac{\delta}{\gamma^2}-\frac{1}{2}\right)+\sqrt{\left(\frac{\delta}{\gamma^2}-\frac{1}{2}\right)^2+\frac{2r}{\gamma^2}}>1,\\
&H=\frac{2c\rho}{\gamma^2(n-\alpha)(\alpha-m)}.
\end{aligned}\]
Moreover, by the smooth-fit principle, we have
\begin{equation}
\begin{cases}
&A\tilde x_b^m+B\tilde x_b^n+H\tilde x_b^\alpha=p\tilde x_b+C_1,\\
&mA\tilde x_b^{m-1}+nB\tilde x_b^{n-1}+\alpha H\tilde x_b^{\alpha-1}=p,\\
&m(m-1)A\tilde x_b^{m-2}+n(n-1)B\tilde x_b^{n-2}+\alpha(\alpha-1) H\tilde x_b^{\alpha-2}=0,\\
&A\tilde x_s^m+B\tilde x_s^n+H\tilde x_s^\alpha=p(1-\lambda)\tilde x_s+C_2,\\
&mA\tilde x_s^{m-1}+nB\tilde x_s^{n-1}+\alpha H\tilde x_s^{\alpha-1}=p(1-\lambda),\\
&m(m-1)A\tilde x_s^{m-2}+n(n-1)B\tilde x_s^{n-2}+\alpha(\alpha-1) H\tilde x_s^{\alpha-2}=0.\end{cases}
\end{equation}
Some algebraic manipulations yield 
\begin{equation}\label{eq:A}
A=\frac{p(n-1)\tilde x_b-\alpha(n-\alpha)H\tilde x_b^\alpha}{m(n-m)\tilde x_b^m}=\frac{p(1-\lambda)(n-1)\tilde x_s-\alpha(n-\alpha)H\tilde x_s^\alpha}{m(n-m)\tilde x_s^m};
\end{equation}
and
\begin{equation}\label{eq:B}
B=\frac{p(m-1)\tilde x_b-\alpha(m-\alpha)H\tilde x_b^\alpha}{n(m-n)\tilde x_b^n}=\frac{p(1-\lambda)(m-1)\tilde x_s-\alpha(m-\alpha)H\tilde x_s^\alpha}{n(m-n)\tilde x_s^n}.
\end{equation}
\begin{figure}[!ht]
\centering
\includegraphics[width=0.75\textwidth]{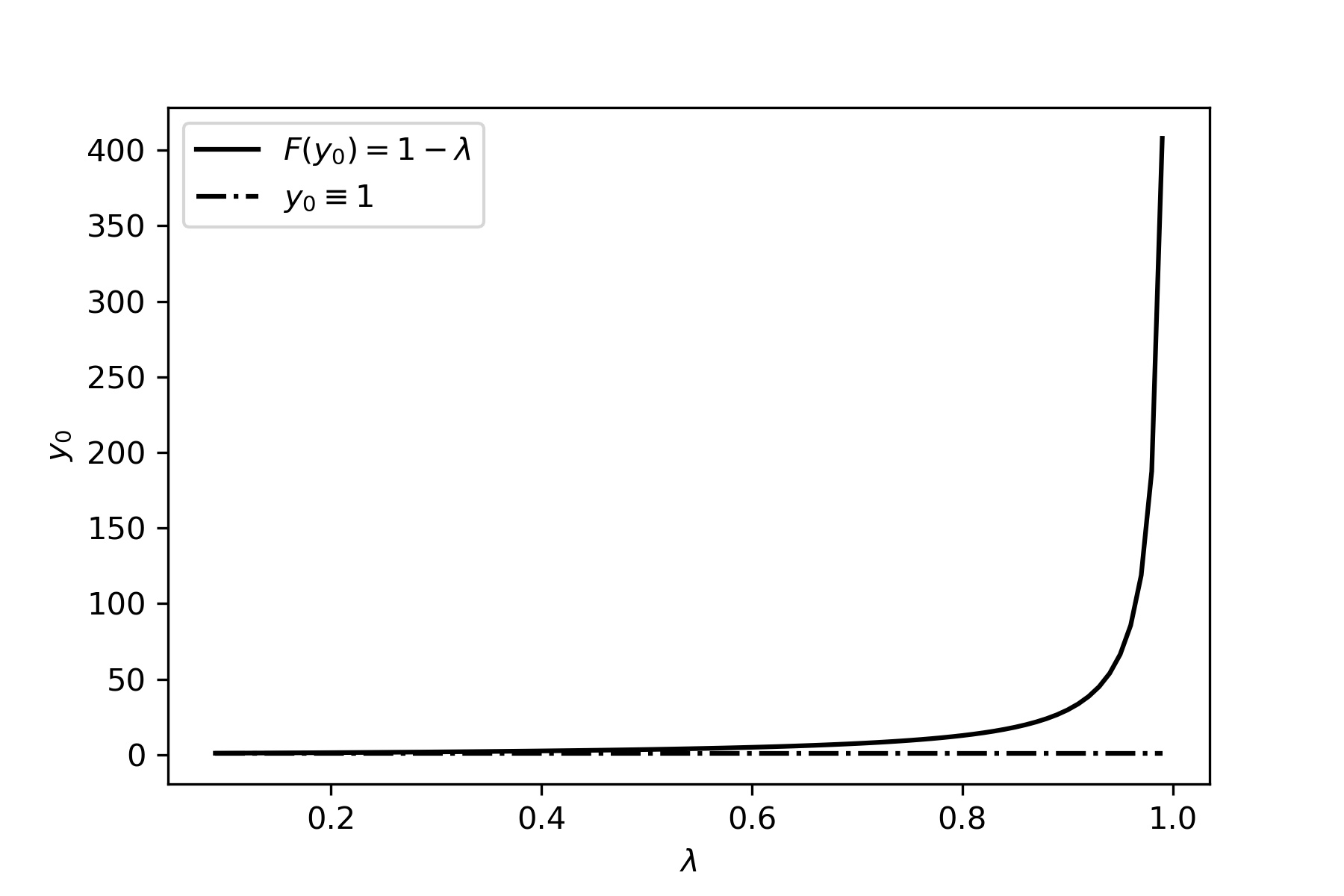}
\caption{$y_0$ increases along with $\lambda$.}
\label{fig:y0-lbd}
\end{figure}
Furthermore, denote $y_0=\frac{\tilde x_s}{\tilde x_b}$ and $y_0\geq1$. By \eqref{eq:A} and $\eqref{eq:B}$, we have 
\begin{empheq}[left=\empheqlbrace]{align}
&p(n-1)\left[(1-\lambda)y_0-y_0^m\right]=\alpha(n-\alpha)H\tilde x_b^{\alpha-1}\left(y_0^\alpha-y_0^m\right),\label{eq:lbd1}\\
&p(m-1)\left[(1-\lambda)y_0-y_0^n\right]=\alpha(m-\alpha)H\tilde x_b^{\alpha-1}\left(y_0^\alpha-y_0^n\right),\label{eq:lbd2}
\end{empheq}
and
\begin{equation}\label{eq: y}
\frac{(n-1)(\alpha-m)y_0^{m-1}(y_0^\alpha-y_0^n)+(1-m)(n-\alpha)y_0^{n-1}(y_0^m-y_0^\alpha)}{(n-1)(\alpha-m)(y_0^\alpha-y_0^n)+(1-m)(n-\alpha)(y_0^m-y_0^\alpha)}=1-\lambda.
\end{equation}
Now, to show that there exists a $y_0$ for \eqref{eq: y}, define $F(y)$ for $y>1$ that
\begin{equation*}F(y)=\frac{(n-1)(\alpha-m)y^{m-1}(y^\alpha-y^n)+(1-m)(n-\alpha)y^{n-1}(y^m-y^\alpha)}{(n-1)(\alpha-m)(y^\alpha-y^n)+(1-m)(n-\alpha)(y^m-y^\alpha)}.\end{equation*}
Since $\lim_{y\to1^+}F(y)=1$, $\lim_{y\to\infty}F(y)=0$, and $F$ is continuous, there exists a $y_0>1$ satisfying $F(y_0)=1-\lambda\in(0,1)$ (see also Figure \ref{fig:y0-lbd}). Note that the function $F$ does not depend on $\rho$, therefore $y_0$ is independent of $\rho$. From \eqref{eq:lbd2}, we can conclude that 
\begin{equation}\label{eq: l}
\tilde x_b=\left\{\frac{2c\alpha(y_0^n-y_0^\alpha)}{\gamma^2p(1-m)(n-\alpha)\left[y_0^n-(1-\lambda)y_0\right]}\right\}^{\frac{1}{1-\alpha}}\rho^{\frac{1}{1-\alpha}}.
\end{equation}
where $\left\{\frac{2c\alpha(y_0^n-y_0^\alpha)}{\gamma^2p(1-m)(n-\alpha)\left[y_0^n-(1-\lambda)y_0\right]}\right\}^{\frac{1}{1-\alpha}}$ does not depend on $\rho$, and 
\begin{equation}\label{eq: u}
\tilde x_s=\tilde x_by_0=\left\{\frac{2c\alpha y_0^{1-\alpha}(y_0^n-y_0^\alpha)}{\gamma^2p(1-m)(n-\alpha)\left[y_0^n-(1-\lambda)y_0\right]}\right\}^{\frac{1}{1-\alpha}}\rho^{\frac{1}{1-\alpha}}.
\end{equation}
After plugging in \eqref{eq: l} and \eqref{eq: u}, $A$ and $B$ are given by \eqref{eq:A} and \eqref{eq:B}, respectively, and 
\begin{equation*}
C_1=A\tilde x_b^m+B\tilde x_b^n+H\tilde x_b^\alpha-p\tilde x_b,\quad C_2=A\tilde x_s^m+B\tilde x_s^n+H\tilde x_s^\alpha-p(1-\lambda)\tilde x_s.\end{equation*}

To justify that the above analytical solution is indeed the solution to the problem \eqref{eq:mfg-ctrl}, one way is via the verification theorem, see for instance \cite{GX2018}. Alternatively, one can first show that the value function is the unique viscosity solution to the corresponding HJB and then establish the uniqueness of a classical $\mathcal C^2$ solution to the HJB, see for instance \cite{Guo2005}. Here we adopt the second approach and claim that under any given $\rho>0$, $\tilde v$ derived above is the value function of problem \eqref{eq:mfg-ctrl}. The proof is similar  to that for  the single-agent case in \cite{Guo2005}, therefore omitted here.

\paragraph{Step 2. Updating the price $\rho$ and the locating the fixed point.} Under any fixed $\rho>0$, the optimal controlled process $x_t$ is a geometric reflected Brownian motion within the interval $[\tilde x_b,\tilde x_s]$. By \cite{Browne1995}, for any $x\in[\tilde x_b,\tilde x_s]$, the scale density is given by
\[s(x)=\exp\left\{-\int_\theta^x\frac{2\delta}{\gamma^2y}dy\right\}=\theta^{\frac{2\delta}{\gamma^2}}x^{-\frac{2\delta}{\gamma^2}},\quad \forall \theta \in(\tilde x_b,\tilde x_s),\]
the speed density is
\[m(x)=\frac{2}{\gamma^2 x^2s(x)}=\frac{2}{\gamma^2\theta^{\frac{2\delta}{\gamma^2}}}x^{\frac{2\delta}{\gamma^2}-2},\]
and finally
\[M(x)=\int_{\tilde x_b}^x m(y)dy=\frac{2}{\gamma^2\theta^{\frac{2\delta}{\gamma^2}}}\frac{x^{\frac{2\delta}{\gamma^2}-1}-\tilde x_b^{\frac{2\delta}{\gamma^2}-1}}{\frac{2\delta}{\gamma^2}-1}.\]
The density function of $P_{x_\infty}$, the limiting distribution of $x_t$, is thus 
\[f(x)=\frac{m(x)}{M(\tilde x_s)}=\frac{\frac{2\delta}{\gamma^2}-1}{\tilde x_s^{\frac{2\delta}{\gamma^2}-1}-\tilde x_b^{\frac{2\delta}{\gamma^2}-1}}x^{\frac{2\delta}{\gamma^2}-2}, \quad\forall x\in[\tilde x_b,\tilde x_s].\]
The updated price $\bar\rho$ under the limiting distribution $\bar\mu=Law(x_\infty)$ is 
\begin{equation}\label{eq:gamma}
\begin{aligned}
\bar\rho=\Gamma(\rho)&=a_0-a_1\int_{\tilde x_b}^{\tilde x_s}x^{1-\alpha}f(x)dx=a_0-a_1\frac{2\delta-\gamma^2}{2\delta-\alpha\gamma^2}\frac{\tilde x_s^{\frac{2\delta}{\gamma^2}-\alpha}-\tilde x_b^{\frac{2\delta}{\gamma^2}-\alpha}}{\tilde x_s^{\frac{2\delta}{\gamma^2}-1}-\tilde x_b^{\frac{2\delta}{\gamma^2}-1}}\\
&=a_0-\rho\cdot a_1\frac{2\delta-\gamma^2}{2\delta-\alpha\gamma^2}\frac{y_0^{\frac{2\delta}{\gamma^2}-\alpha}-1}{y_0^{\frac{2\delta}{\gamma^2}-1}-1}\frac{2c\alpha(y_0^n-y_0^\alpha)}{\gamma^2p(1-m)(n-\alpha)\left[y_0^n-(1-\lambda)y_0\right]},
\end{aligned}
\end{equation}
where the coefficient $a_1\frac{2\delta-\gamma^2}{2\delta-\alpha\gamma^2}\frac{y_0^{\frac{2\delta}{\gamma^2}-\alpha}-1}{y_0^{\frac{2\delta}{\gamma^2}-1}-1}\frac{2c\alpha(y_0^n-y_0^\alpha)}{\gamma^2p(1-m)(n-\alpha)\left[y_0^n-(1-\lambda)y_0\right]}$ does not rely on $\rho$. Clearly, for $a_1$ such that 
\begin{equation}\label{eq:a1}
a_1>0,\ \ a_1\frac{2\delta-\gamma^2}{2\delta-\alpha\gamma^2}\frac{y_0^{\frac{2\delta}{\gamma^2}-\alpha}-1}{y_0^{\frac{2\delta}{\gamma^2}-1}-1}\frac{2c\alpha(y_0^n-y_0^\alpha)}{\gamma^2p(1-m)(n-\alpha)\left[y_0^n-(1-\lambda)y_0\right]}<1,
\end{equation}
the mapping $\Gamma$ is a contraction and therefore admits a unique fixed point 
\begin{equation}\label{eq: rho}
\rho^*=\frac{a_0}{1+a_1\frac{2\delta-\gamma^2}{2\delta-\alpha\gamma^2}\frac{y_0^{\frac{2\delta}{\gamma^2}-\alpha}-1}{y_0^{\frac{2\delta}{\gamma^2}-1}-1}\frac{2c\alpha(y_0^n-y_0^\alpha)}{\gamma^2p(1-m)(n-\alpha)\left[y_0^n-(1-\lambda)y_0\right]}}.
\end{equation}
Substitute $\rho^*$ of \eqref{eq: rho} into \eqref{eq: l} and \eqref{eq: u}, we can derive optimal action boundaries $\tilde x_b^*$ and $\tilde x_s^*$. Denote the singular control characterized by $(\tilde x^*_b, \tilde x^*_s)$ as $\xi^*_\cdot$. Under Definition \ref{soln-smfg}, $(\xi^*_\cdot,\rho^*)$ is a solution to the game \eqref{example2}.

\begin{remark}\label{rmk:l2}
Note that under the assumption $2\delta+\gamma^2<r$, the uncontrolled process $x=\{x_t\}_{t\geq0}$ satisfies $\eee\left[\int_0^\infty e^{-rt}x_t^2dt\right]<\infty$, and this property is preserved for the controlled process $x^*=\{x^*_t\}_{t\geq0}$ under $\xi^*_\cdot$, as it is restricted to a bounded region.
\end{remark}

\subsection{Sensitivity analysis and comparison with single-agent control problem}
\label{ssec: comparison}
As seen from \eqref{eq:gamma}, the iterations  do not stop after the first round, indicating that the game \eqref{example2} demonstrates a genuine game effect from the weak interactions among the players. Moreover, we can see that in the game \eqref{example2}, model parameters $\lambda$, $\delta$, $\gamma$, $r$ and $\alpha$ affect both the optimal strategy of as
in the single-agent control problem \eqref{eq:ctrl} and the equilibrium price $\rho^*$.

To illustrate, consider the following case where $\delta=1$, $\gamma=2$, $r=3$, $\alpha =0.6$, $\lambda = 0.6$, $p=0.5$, $c=1$, $a_0=1$ and $a_1=0.1$. Suppose the iterative process starts from a fixed value $\rho=1$. In the single-agent setting \eqref{eq:ctrl} where the price $\rho=1$ is seen as exogenously given and fixed, the optimal thresholds are given by $x_b=0.053$ and $x_s=0.264$. Figure \ref{fig: bdry-rho} shows that both ${x}_b$ and ${x}_s$ increase along with the value of $\rho$ and the non-action region $[x_b,x_s]$ expands.
\begin{figure}[!ht]
\centering
\includegraphics[width=0.75\textwidth]{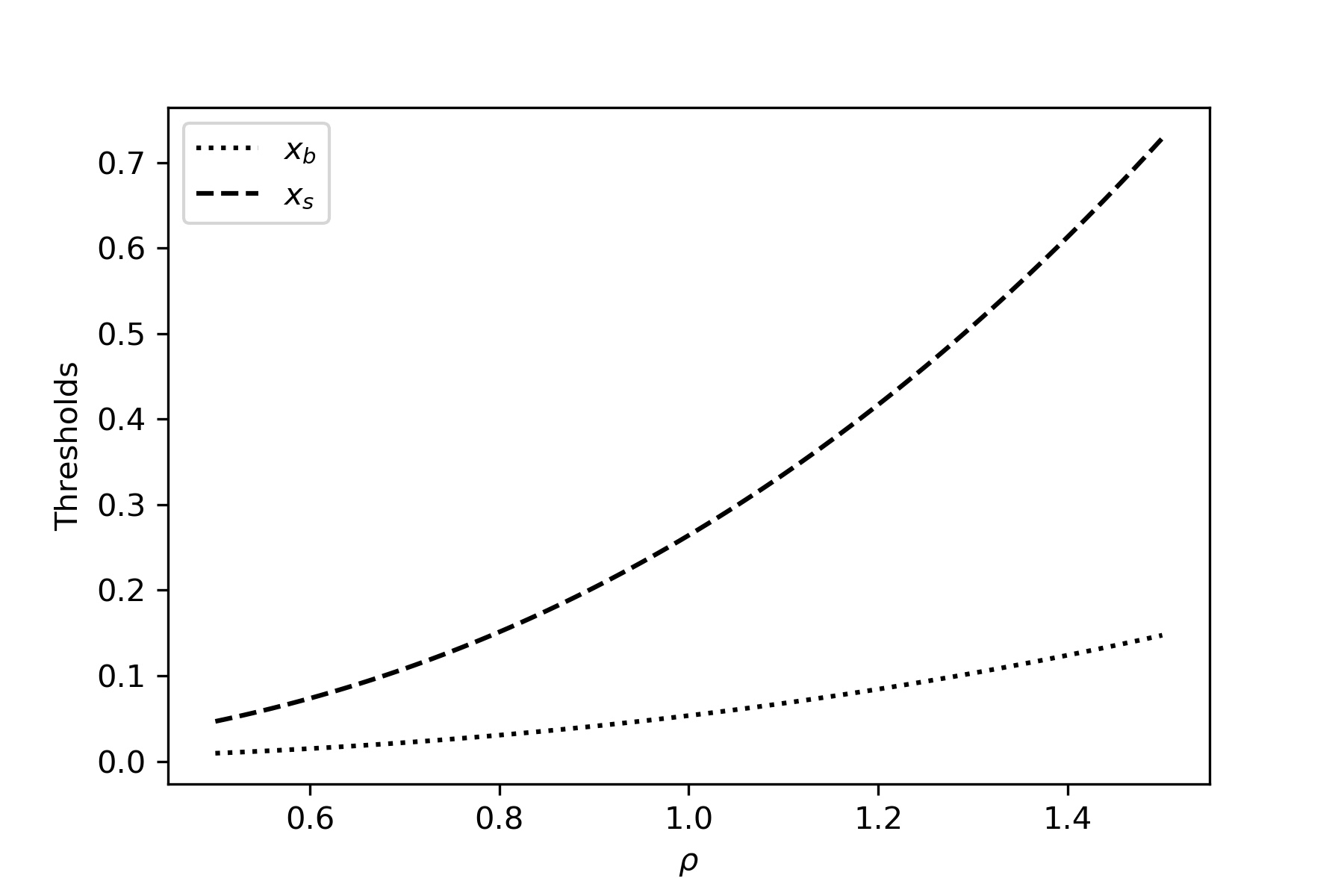}
\caption{Thresholds under different values of $\rho$.}
\label{fig: bdry-rho}
\end{figure}
In the game \eqref{example2}, in contrast, the equilibrium price is $\rho^*=0.96$ under which the optimal thresholds are $\tilde x_b^*=0.048$ and $\tilde x_s^*=0.239$. Figure \ref{fig:1vm} shows the difference in the thresholds of intervention between the single-agent control problem \eqref{eq:ctrl} and the game \eqref{example2}.
\begin{figure}[!ht]
\centering
\includegraphics[width=0.85\textwidth]{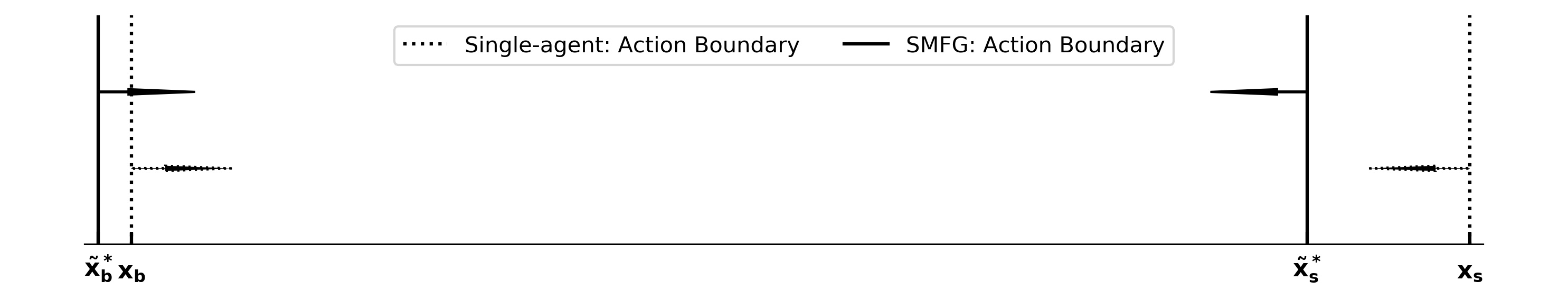}
\caption{Single-agent v.s. MFG}
\label{fig:1vm}
\end{figure}

\paragraph{Impact of $\lambda$.} $\lambda\in(0,1)$ measures the irreversibility of the investment: the closer $\lambda$ to 1, the more irreversible the investment. For the single-agent control problem \eqref{eq:ctrl} (Figures \ref{fig: 1vsm-l-lbd} and \ref{fig: 1vsm-u-lbd}), the expansion threshold $x_b$ stays relatively insensitive with respect to an increasing $\lambda$, the contraction threshold $x_s$ however increases dramatically along with $\lambda$. This means that for an individual company, if the investment is more irreversible, it becomes less profitable to frequently decrease the production level; consequently, the contraction threshold is raised to a higher level. Under the game \eqref{example2} setting, the irreversibility does not have an immediate impact on the optimal strategies (Figure \ref{fig: bdry-lbd}); instead, it drives down the equilibrium price (Figure \ref{fig: rho-lbd}). This suggests that as it becomes less profitable to reduce production when $\lambda$ approaches 1, companies in the game \eqref{example2} tend to keep a higher production level and this tendency collectively reduces the price due to the risk-aversion implied by the Cobb-Douglas function.
\begin{figure}[!ht]
\centering
\begin{subfigure}[t]{0.48\textwidth}
\includegraphics[width = \textwidth]{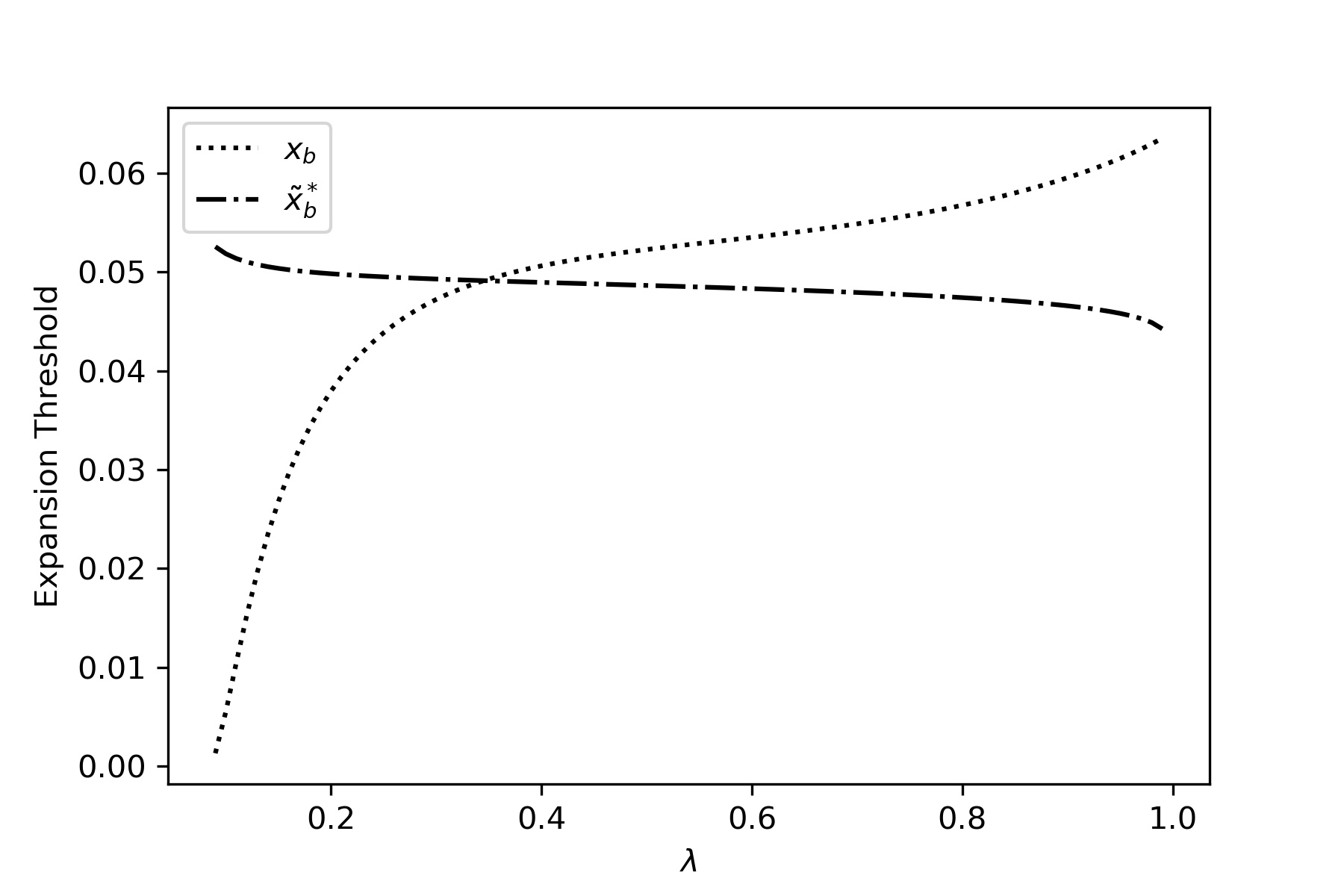}
\caption{Expansion threshold under different values of $\lambda$: single-agent v.s. MFG}
\label{fig: 1vsm-l-lbd}
\end{subfigure}
~
\begin{subfigure}[t]{0.48\textwidth}
\includegraphics[width = \textwidth]{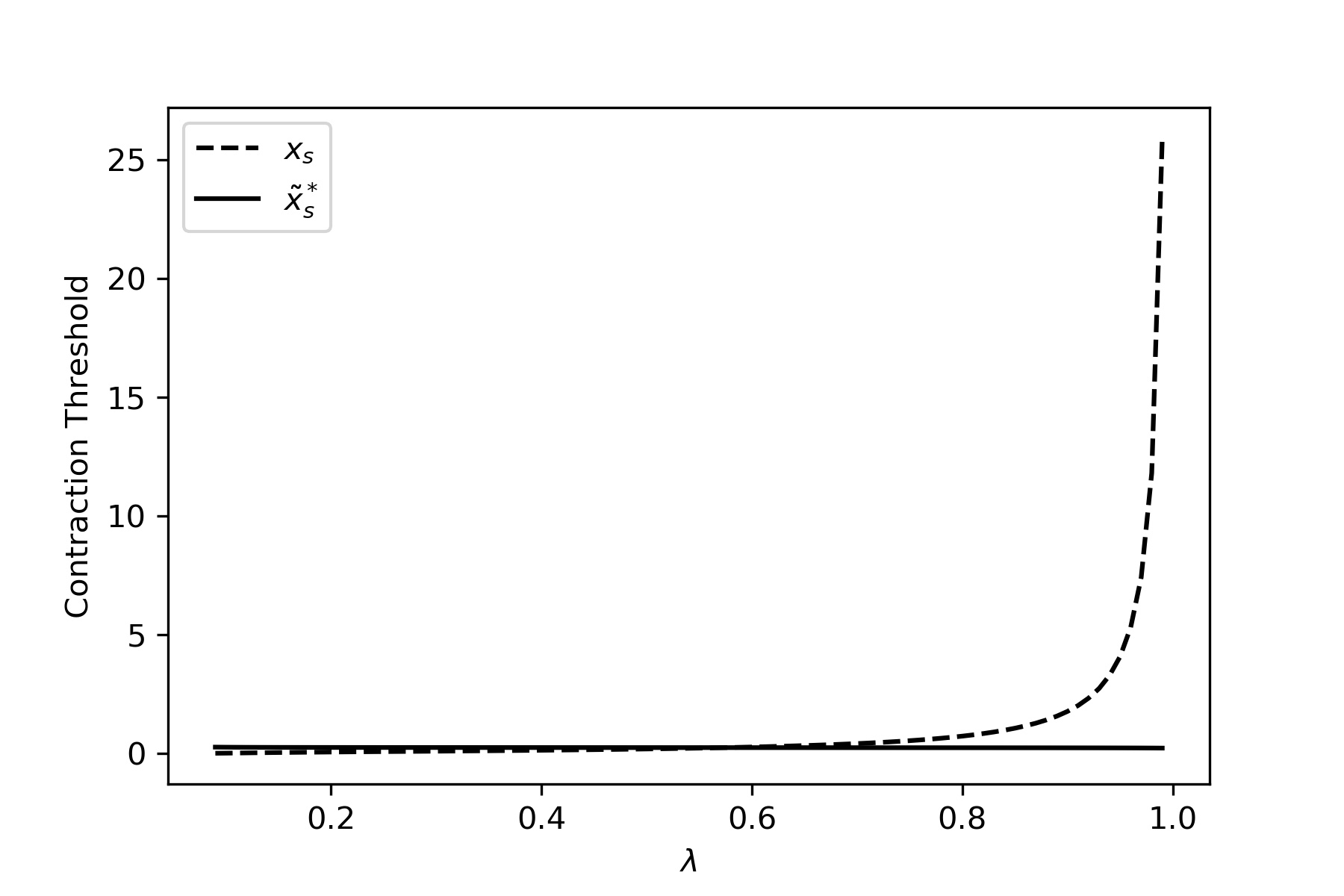}
\caption{Contraction threshold under different values of $\lambda$: single-agent v.s. MFG}
\label{fig: 1vsm-u-lbd}
\end{subfigure}
\\
\begin{subfigure}[t]{0.48\textwidth}
\includegraphics[width = \textwidth]{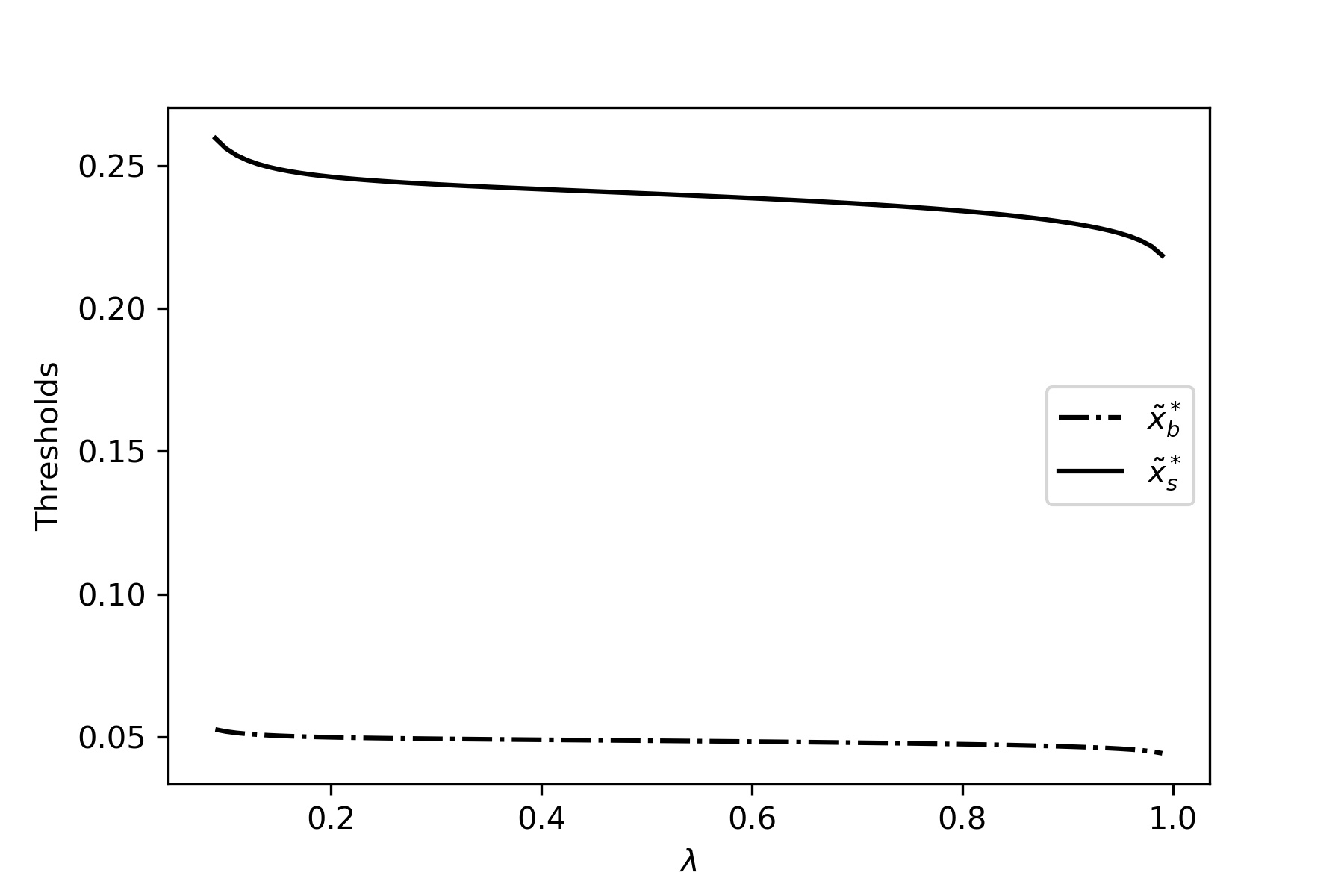}
\caption{MFG optimal thresholds versus $\lambda$}
\label{fig: bdry-lbd}
\end{subfigure}
~
\begin{subfigure}[t]{0.48\textwidth}
\includegraphics[width = \textwidth]{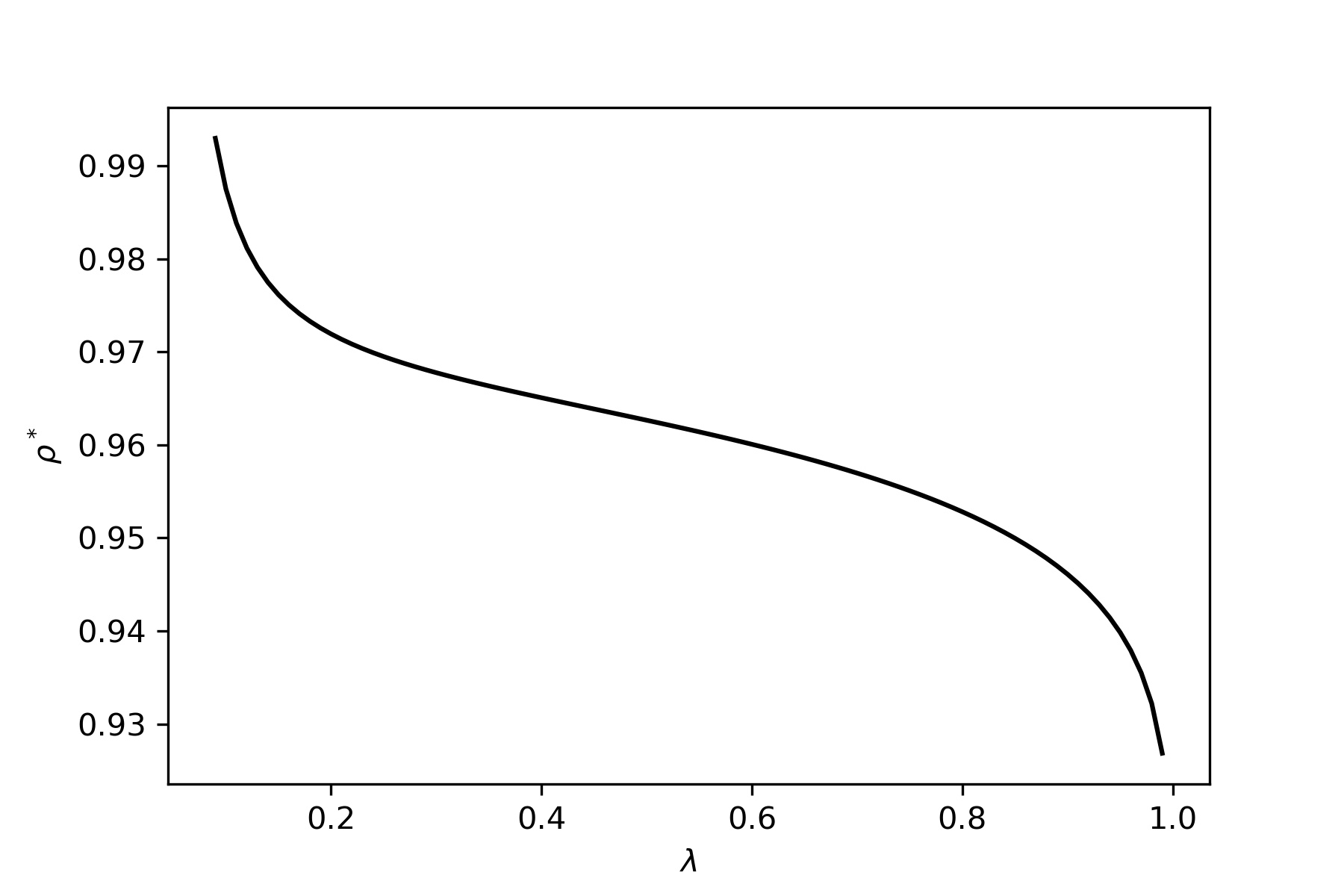}
\caption{Equilibrium price versus $\lambda$}
\label{fig: rho-lbd}
\end{subfigure}
\caption{Impact of $\lambda$.}
\label{fig: lbd}
\end{figure}

\paragraph{Impact of $\delta$ and $\gamma$.} The drift coefficient $\delta$ represents the expected growth rate of the production and $\gamma$ measures the volatility of the growth. The decision of whether or not to adjust the production level is the trade-off between the running payoff $c\rho x_t^\alpha$ and the profit from direct intervention $p(1-\lambda)d\xi^-_t-pd\xi^+_t$, with $\alpha\in(0,1)$. Without any intervention within the time interval $[t,t+\Delta t]$, $x_{t+\Delta t}^\alpha$ is given by 
\begin{equation}\label{eq: dlt}
x_t^\alpha\exp\left\{[\alpha\delta-\frac{\gamma^2}{2}\alpha(1-\alpha)]\Delta t\right\}\exp\left\{\alpha\gamma (W_{t+\Delta t}-W_t)-\frac{\alpha^2\gamma^2}{2}\Delta t\right\},
\end{equation}
 therefore $\alpha\delta-\frac{\gamma^2}{2}\alpha(1-\alpha)$ represents the expected growth rate of $x_t^\alpha$. Under the single-agent setting \eqref{eq:ctrl}, when $\delta$ increases, the revenue function grows faster, leading to higher expansion and contraction thresholds, as shown in Figures \ref{fig: 1vsm-l-dlt} and \ref{fig: 1vsm-u-dlt}. Moreover, the growth in $\delta$ has larger impact on the contraction threshold $x_s$ compared to the the expansion threshold $x_b$. It also implies that each company tends to maintain a higher production level as $\delta$ grows. Under the game \eqref{example2}, this tendency on the individual level is aggregated, driving down the equilibrium price $\rho^*$, as shown in Figure \ref{fig: rho-dlt}. 

\begin{figure}[!ht]
\centering
\begin{subfigure}[t]{0.48\textwidth}
\includegraphics[width = \textwidth]{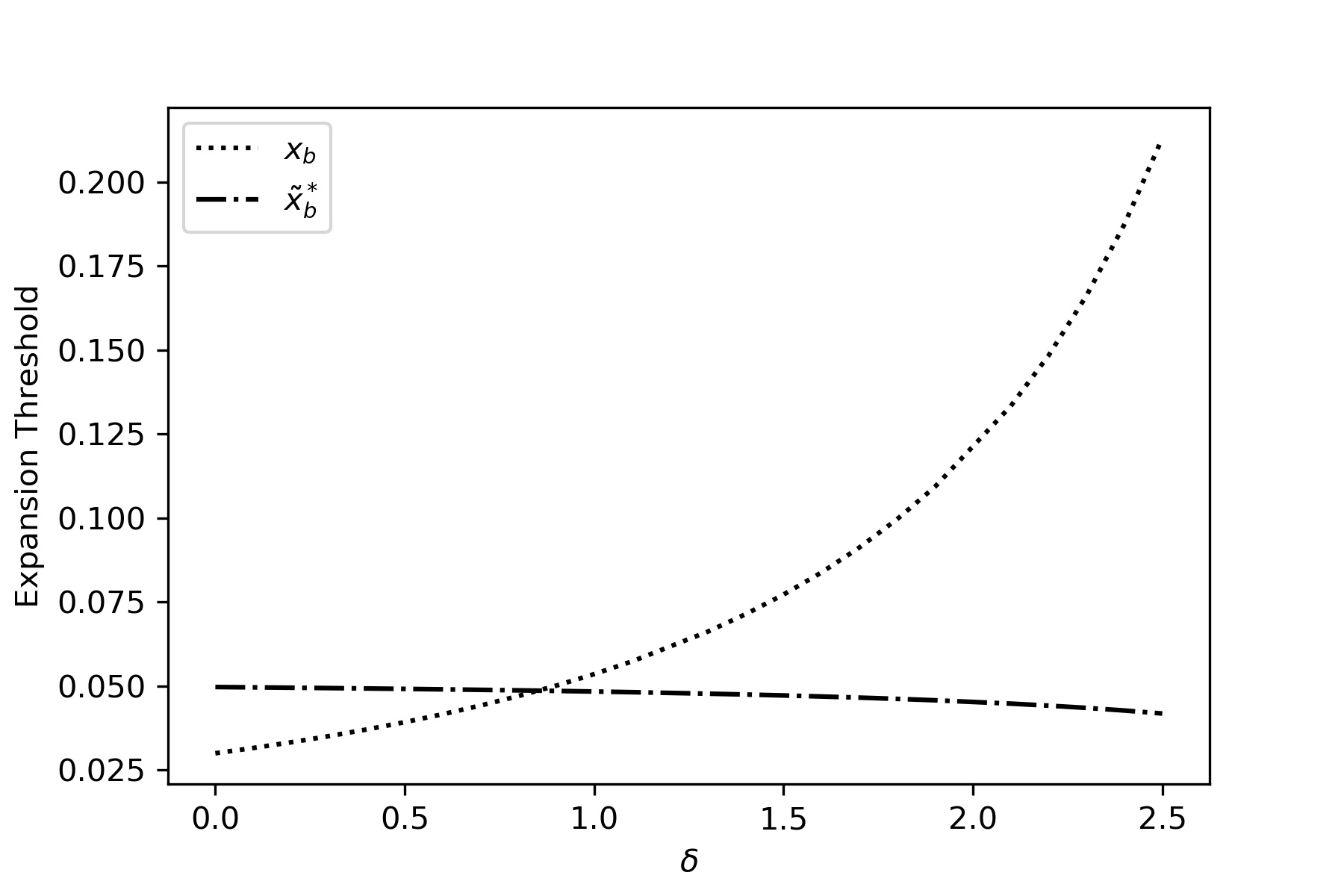}
\caption{Expansion threshold under different values of $\delta$: single-agent v.s. MFG}
\label{fig: 1vsm-l-dlt}
\end{subfigure}
~
\begin{subfigure}[t]{0.48\textwidth}
\includegraphics[width = \textwidth]{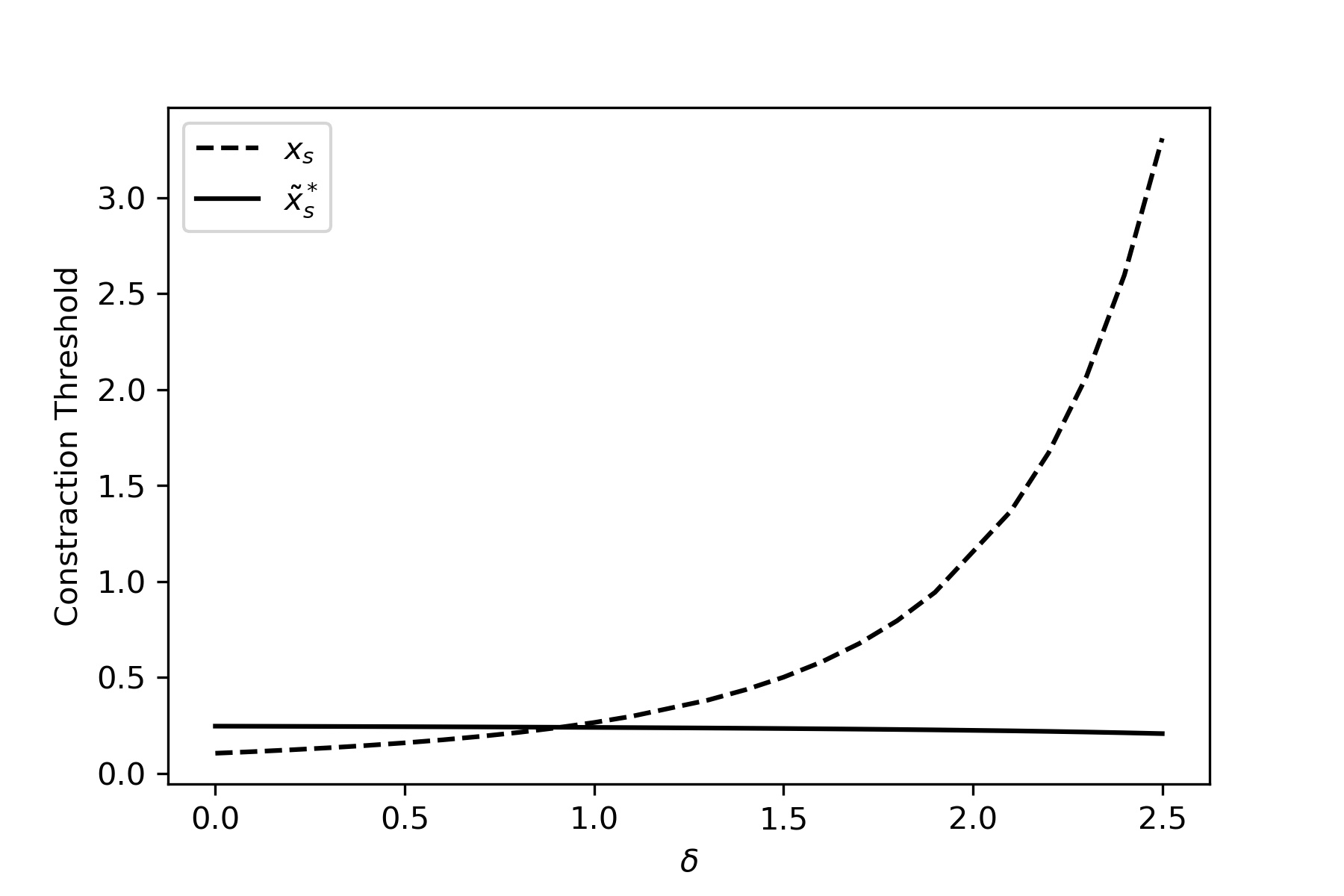}
\caption{Contraction threshold under different values of $\delta$: single-agent v.s. MFG}
\label{fig: 1vsm-u-dlt}
\end{subfigure}
\\
\begin{subfigure}[t]{0.48\textwidth}
\includegraphics[width = \textwidth]{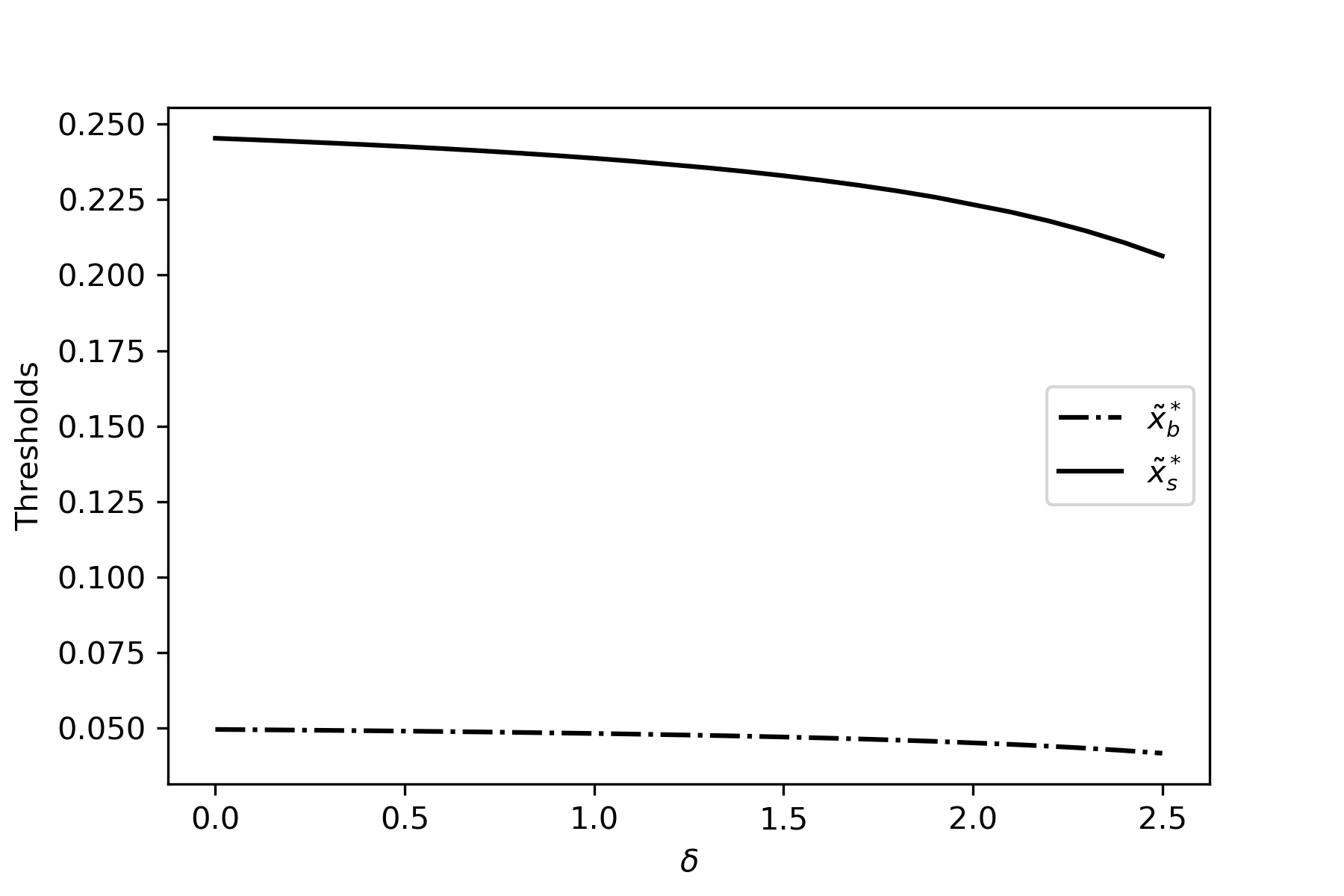}
\caption{MFG optimal thresholds versus $\delta$}
\label{fig: bdry-dlt}
\end{subfigure}
~
\begin{subfigure}[t]{0.48\textwidth}
\includegraphics[width = \textwidth]{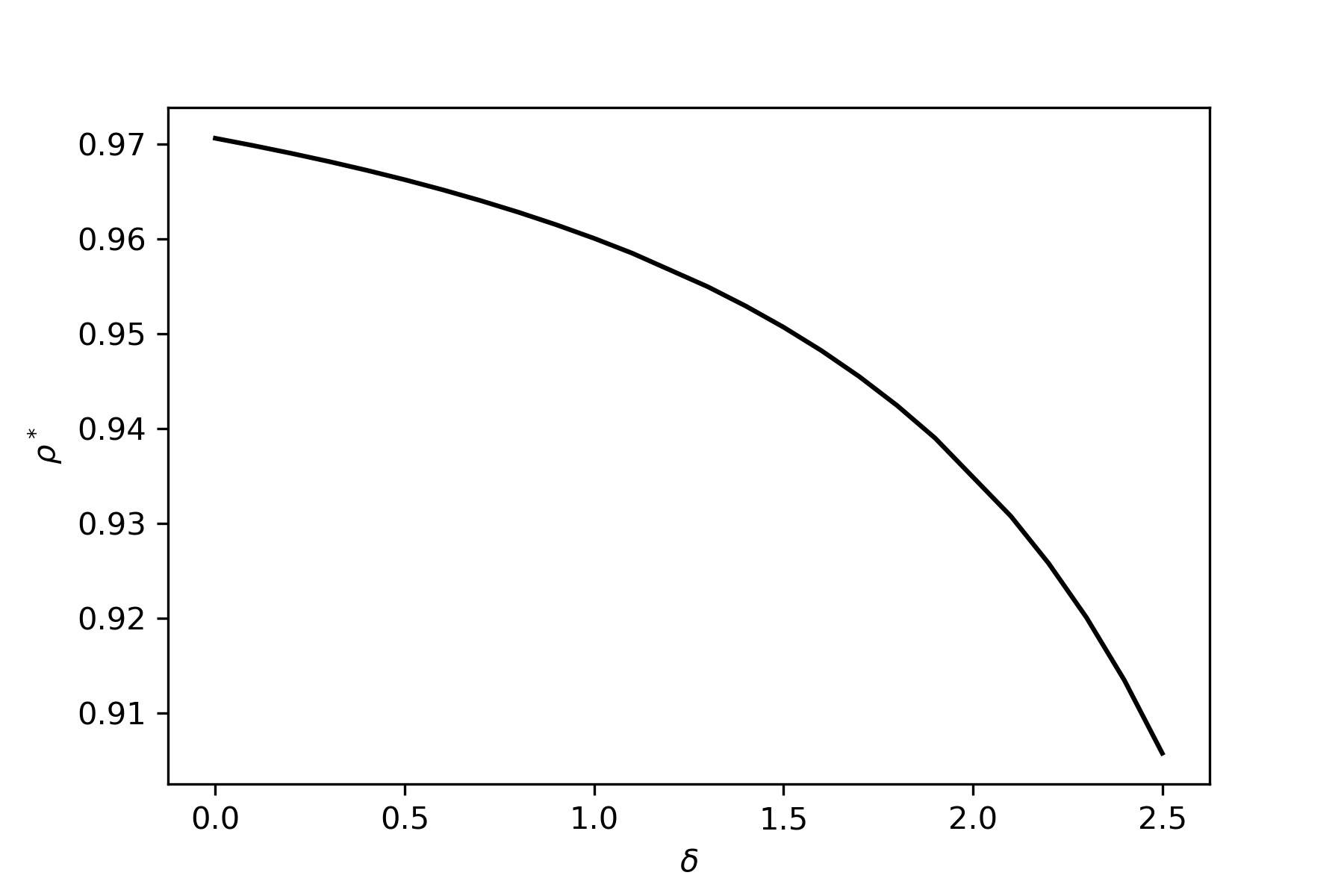}
\caption{Equilibrium price versus $\delta$}
\label{fig: rho-dlt}
\end{subfigure}
\caption{Impact of $\delta$.}
\label{fig: dlt}
\end{figure}

The impact of an increasing $\gamma$ on both the single-agent control problem and the MFG can be seen from the following two perspectives. As $\gamma$ increases, the growth rate of the revenue function $\alpha\delta-\frac{\gamma^2}{2}\alpha(1-\alpha)$ decreases, potentially causing lower expansion and contraction thresholds. An increasing $\gamma$ indicates a larger volatility in the growth rate of the production level and the company can take advantage of the high volatility and reduce the frequency of intervention, potentially decreasing the expansion threshold and increasing the contraction threshold. 

Under both perspectives, the expansion threshold is expected to decrease when $\gamma$ increases. But an increase in $\gamma$ potentially has opposite effects on the contraction threshold. In the single-agent control problem \eqref{eq:ctrl}, the expansion threshold $x_b$ decreases as expected (Figure \ref{fig: 1vsm-l-gm}); the contraction threshold $x_s$ first increases and then decreases (Figure \ref{fig: 1vsm-u-gm}). In the game \eqref{example2}, the prevailing impact of a decreasing growth rate of $x_t^\alpha$ leads to higher the equilibrium price $\rho^*$, as shown in Figure \ref{fig: rho-gm}.

\begin{figure}[!ht]
\centering
\begin{subfigure}[t]{0.48\textwidth}
\includegraphics[width = \textwidth]{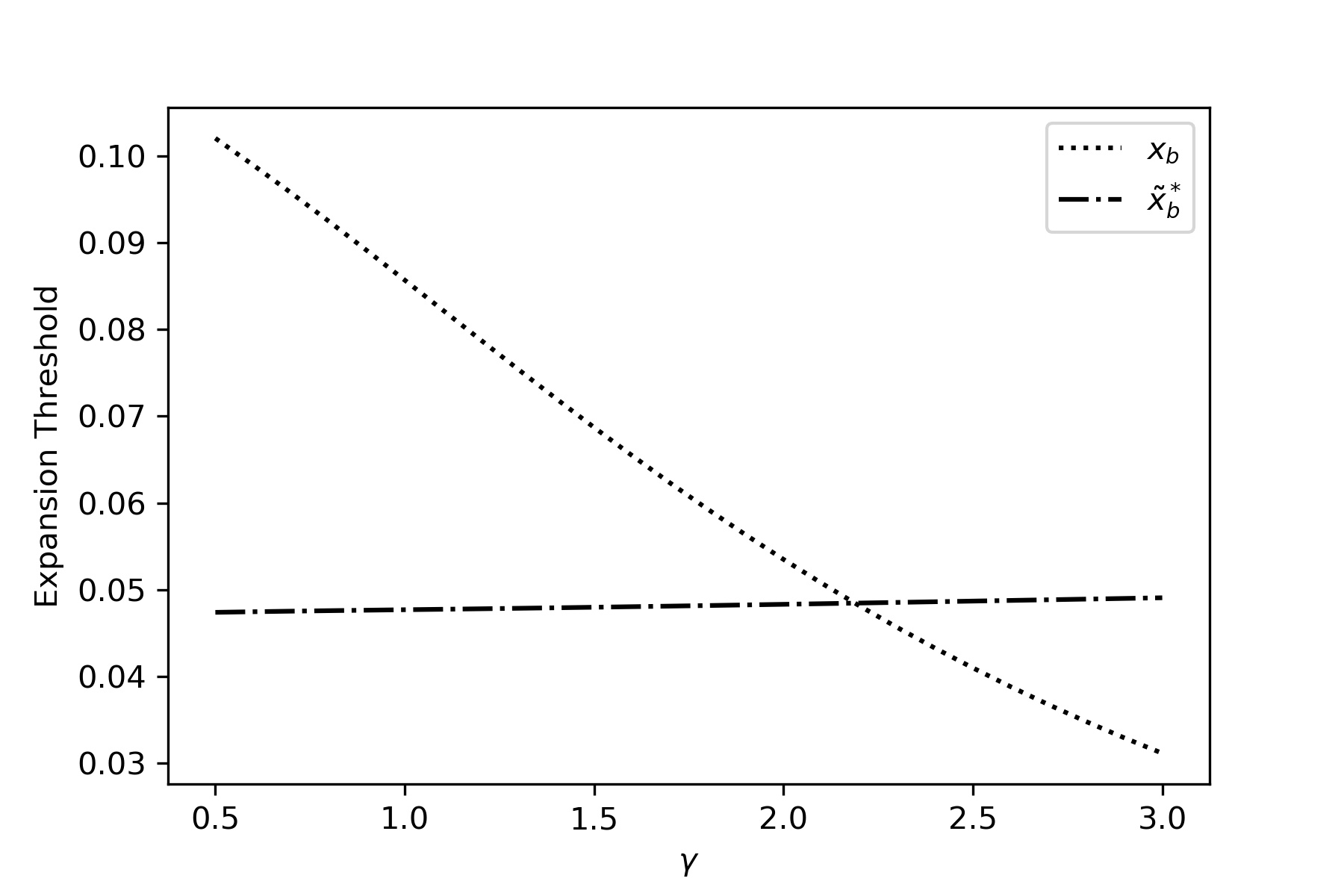}
\caption{Expansion threshold under different values of $\gamma$: single-agent v.s. MFG}
\label{fig: 1vsm-l-gm}
\end{subfigure}
~
\begin{subfigure}[t]{0.48\textwidth}
\includegraphics[width = \textwidth]{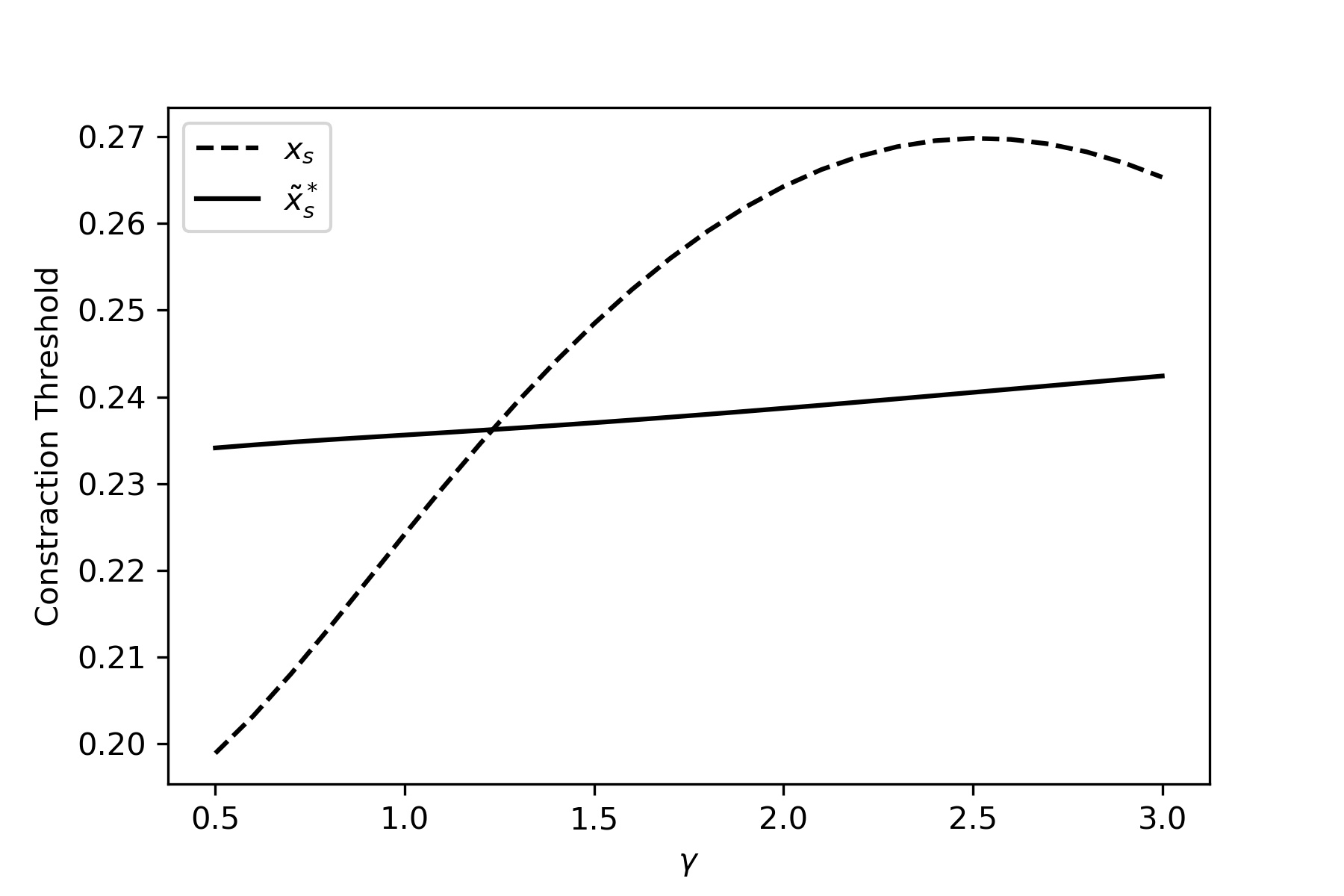}
\caption{Contraction threshold under different values of $\gamma$: single-agent v.s. MFG}
\label{fig: 1vsm-u-gm}
\end{subfigure}
\\
\begin{subfigure}[t]{0.48\textwidth}
\includegraphics[width = \textwidth]{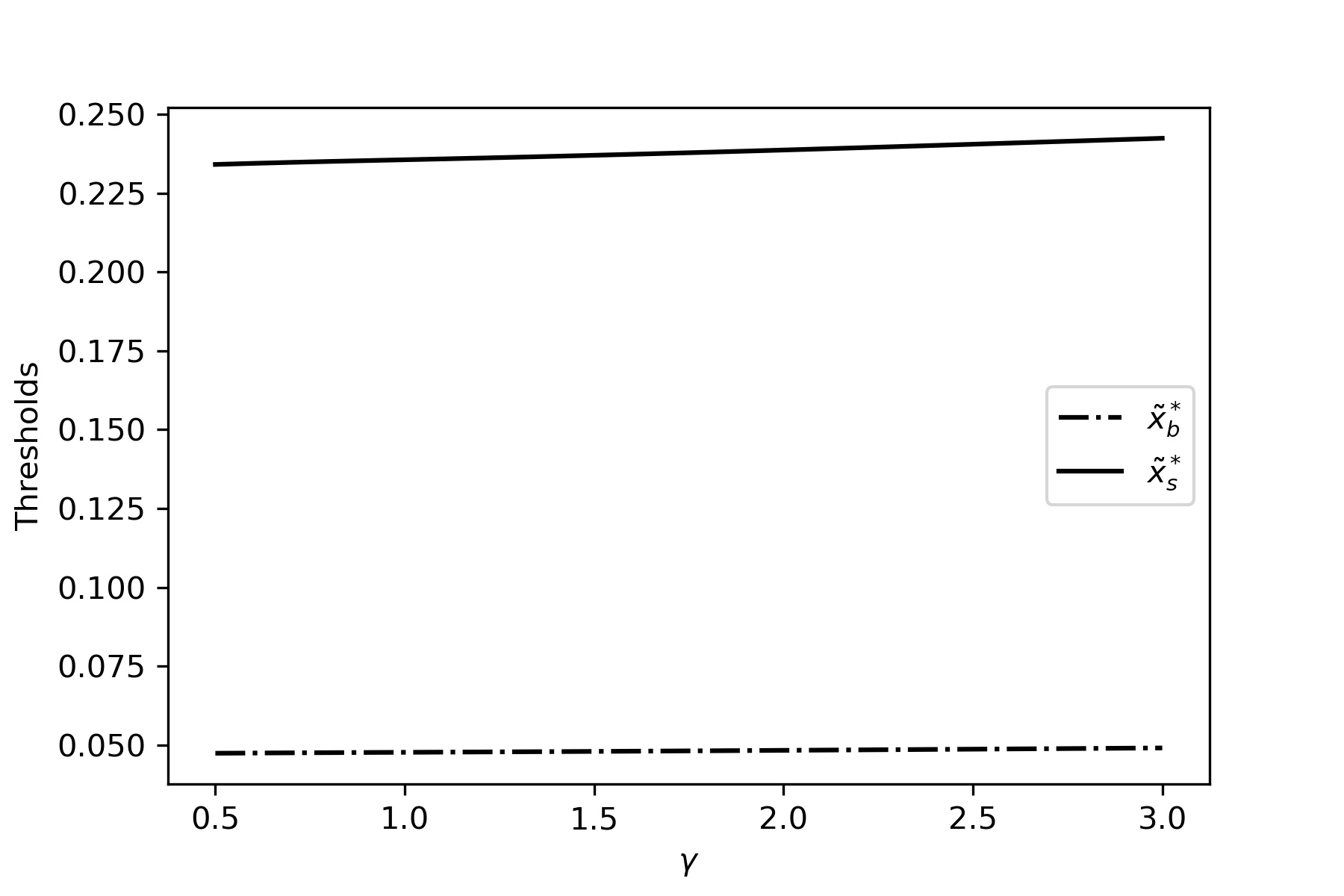}
\caption{MFG optimal thresholds versus $\gamma$}
\label{fig: bdry-gm}
\end{subfigure}
~
\begin{subfigure}[t]{0.48\textwidth}
\includegraphics[width = \textwidth]{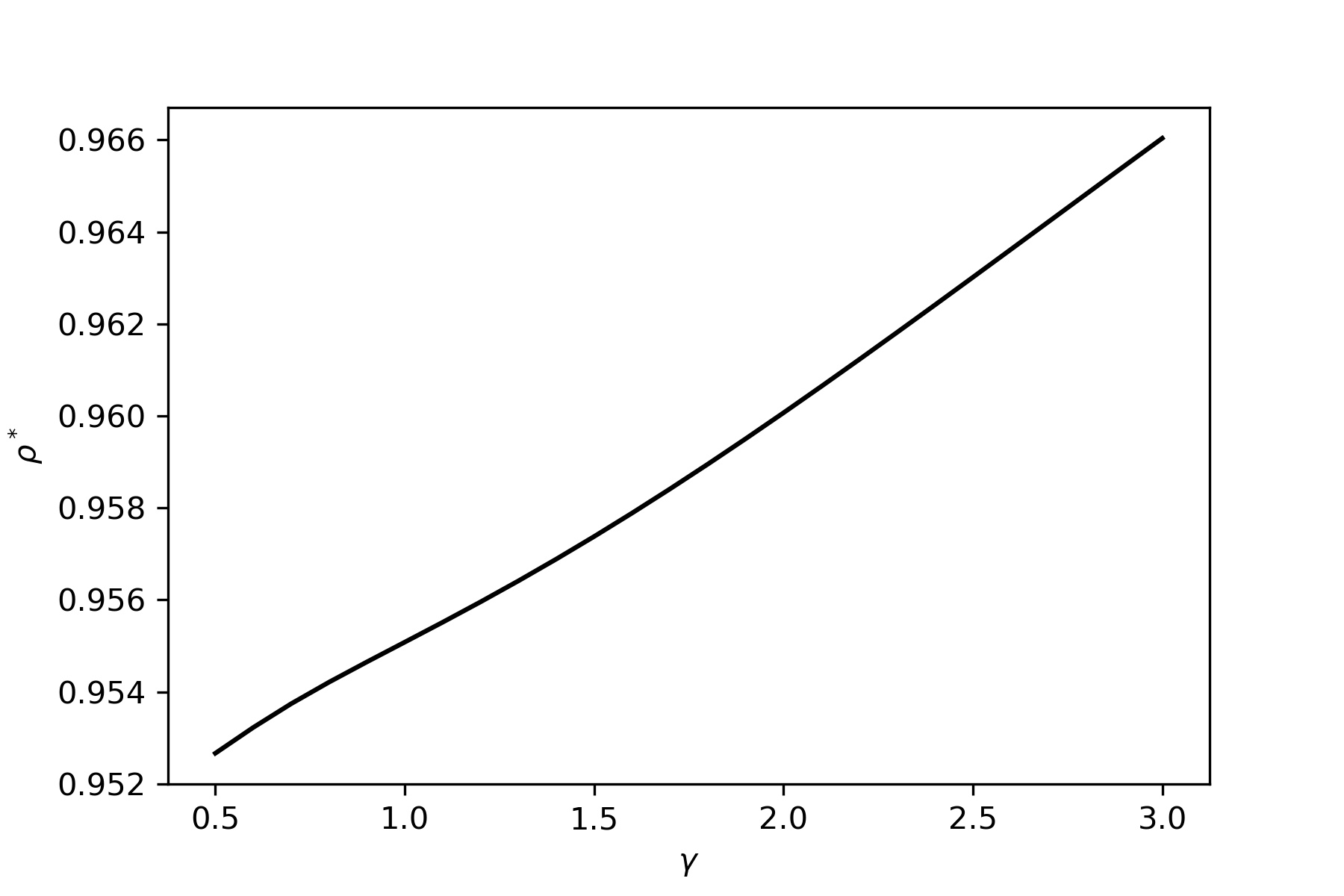}
\caption{Equilibrium price versus $\gamma$}
\label{fig: rho-gm}
\end{subfigure}
\caption{Impact of $\gamma$.}
\label{fig: gm}
\end{figure}

\paragraph{Impact of $r$.} In the single-agent control problem \eqref{eq:ctrl}, as the discount rate $r$ increases, the revenue decays faster as time goes by, thus it becomes more beneficial to decrease the production level for profit and consequently, a significant drop in the contraction threshold in Figure \ref{fig: 1vsm-u-r}. In the game \eqref{example2}, the tendency of decreasing production for each company ultimately drives up the equilibrium price, as shown in Figure \ref{fig: rho-r}.
\begin{figure}[!ht]
\centering
\begin{subfigure}[t]{0.48\textwidth}
\includegraphics[width = \textwidth]{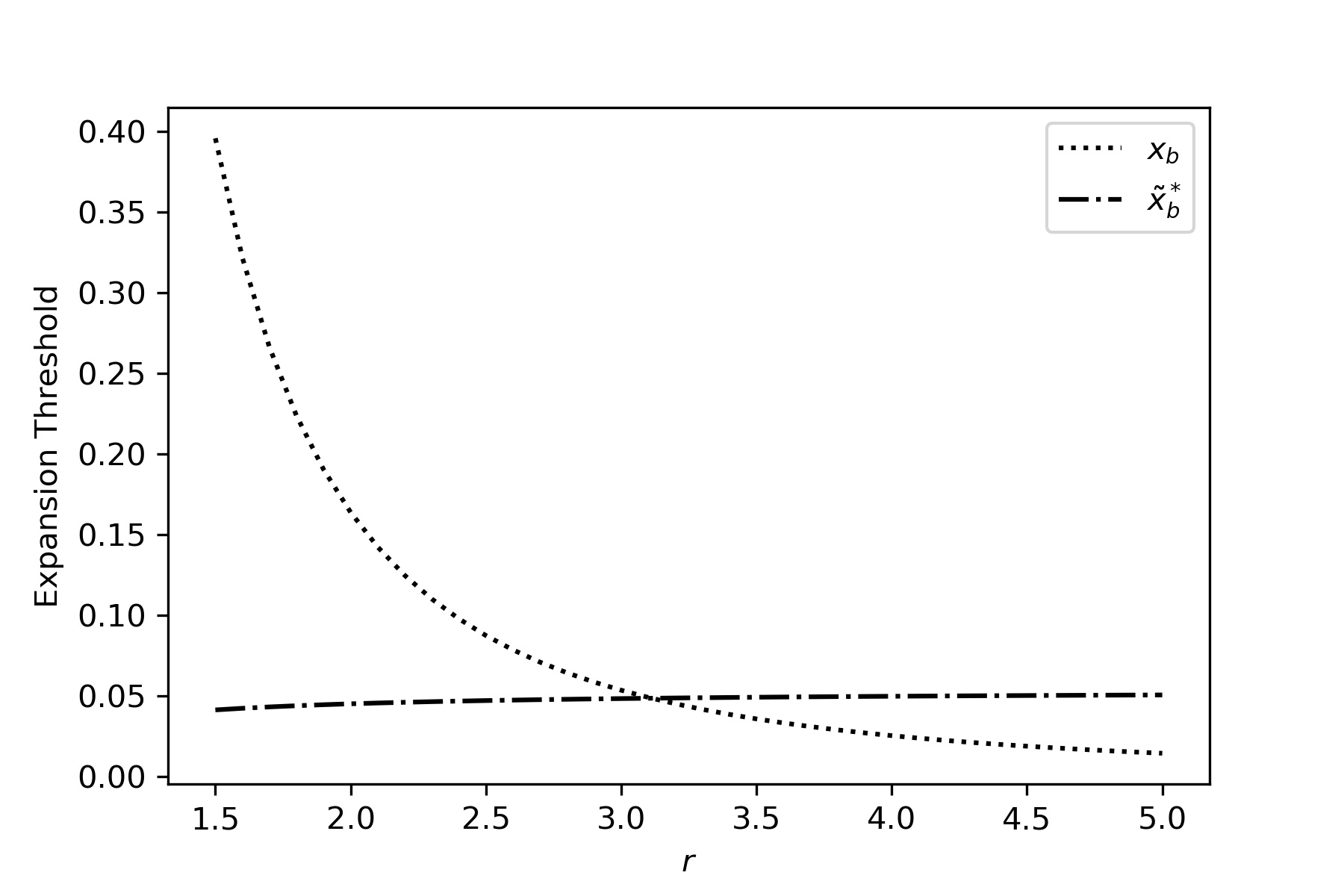}
\caption{Expansion threshold under different values of $r$: single-agent v.s. MFG}
\label{fig: 1vsm-l-r}
\end{subfigure}
~
\begin{subfigure}[t]{0.48\textwidth}
\includegraphics[width = \textwidth]{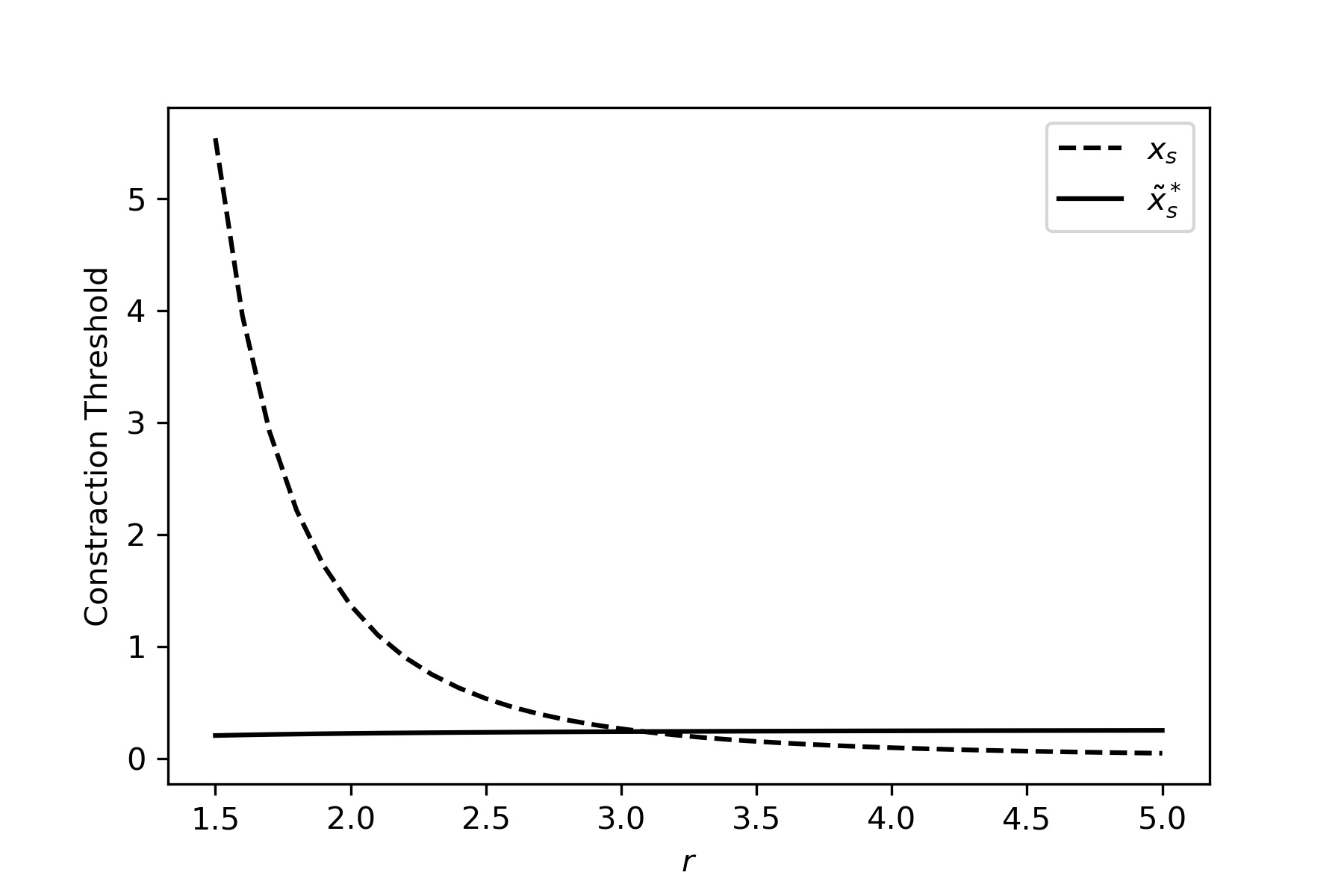}
\caption{Contraction threshold under different values of $r$: single-agent v.s. MFG}
\label{fig: 1vsm-u-r}
\end{subfigure}
\\
\begin{subfigure}[t]{0.48\textwidth}
\includegraphics[width = \textwidth]{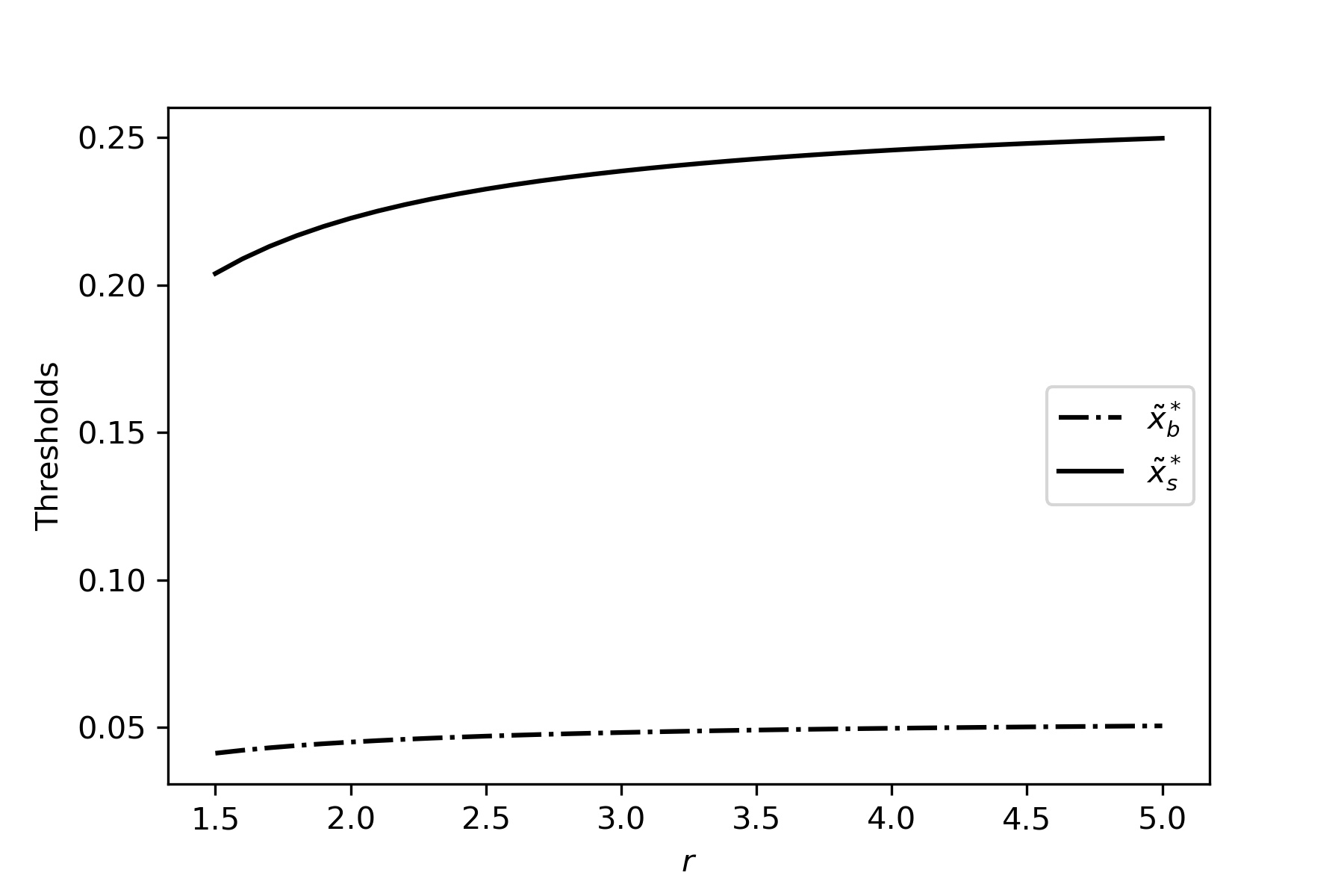}
\caption{MFG optimal thresholds versus $r$}
\label{fig: bdry-r}
\end{subfigure}
~
\begin{subfigure}[t]{0.48\textwidth}
\includegraphics[width = \textwidth]{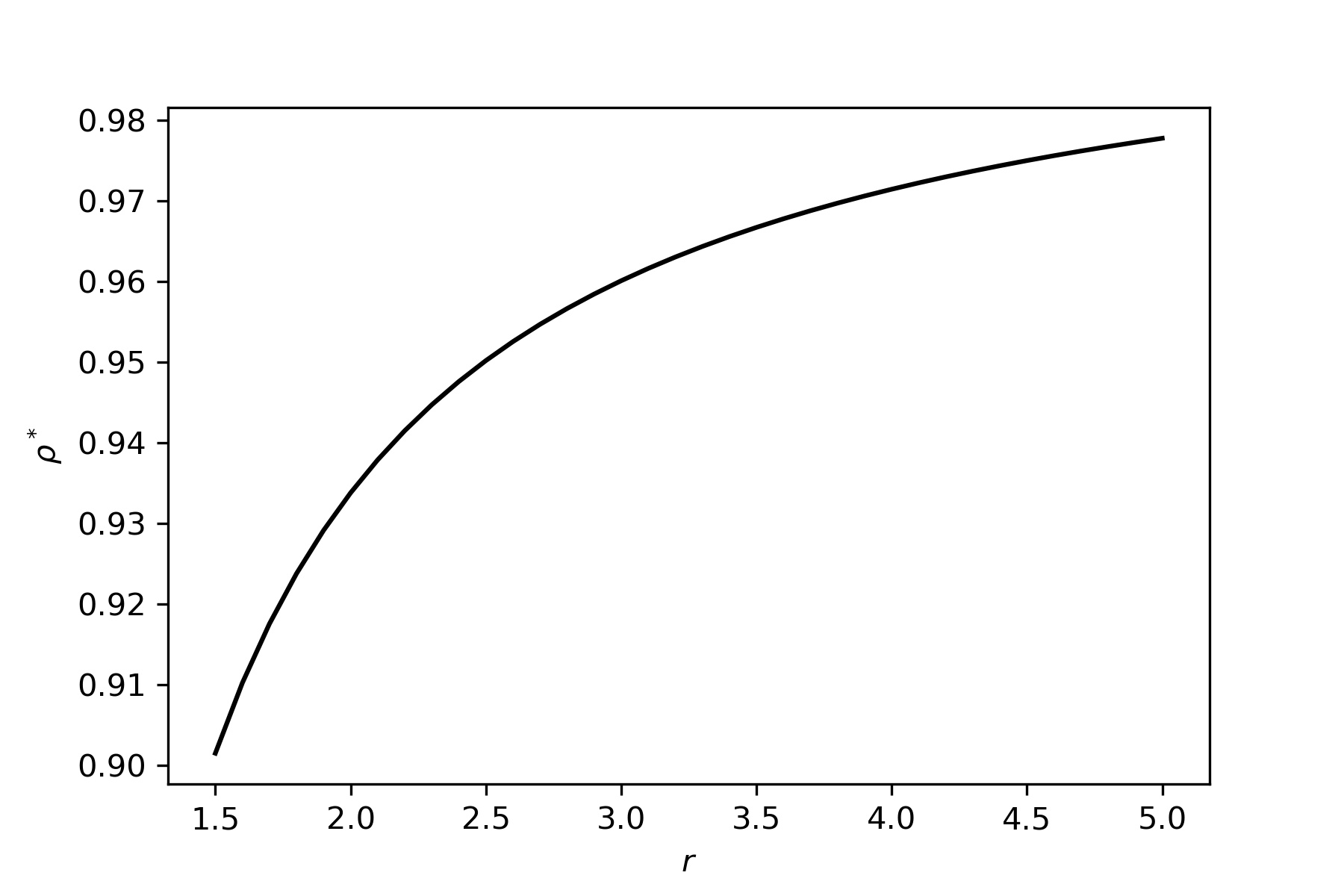}
\caption{Equilibrium price versus $r$}
\label{fig: rho-r}
\end{subfigure}
\caption{Impact of $r$.}
\label{fig: r}
\end{figure}

\paragraph{Impact of $\alpha$.} $\alpha\in(0,1)$ measures the elasticity of the profit with respect to the production. Under the single-agent setting \eqref{eq:ctrl}, both thresholds first increase and then decrease as $\alpha$ approaches 1. In the game \eqref{example2}, the more sensitive the revenue with respect to production, the lower the equilibrium price, as shown in Figure \ref{fig: rho-aph}.
\begin{figure}[!ht]
\centering
\begin{subfigure}[t]{0.48\textwidth}
\includegraphics[width = \textwidth]{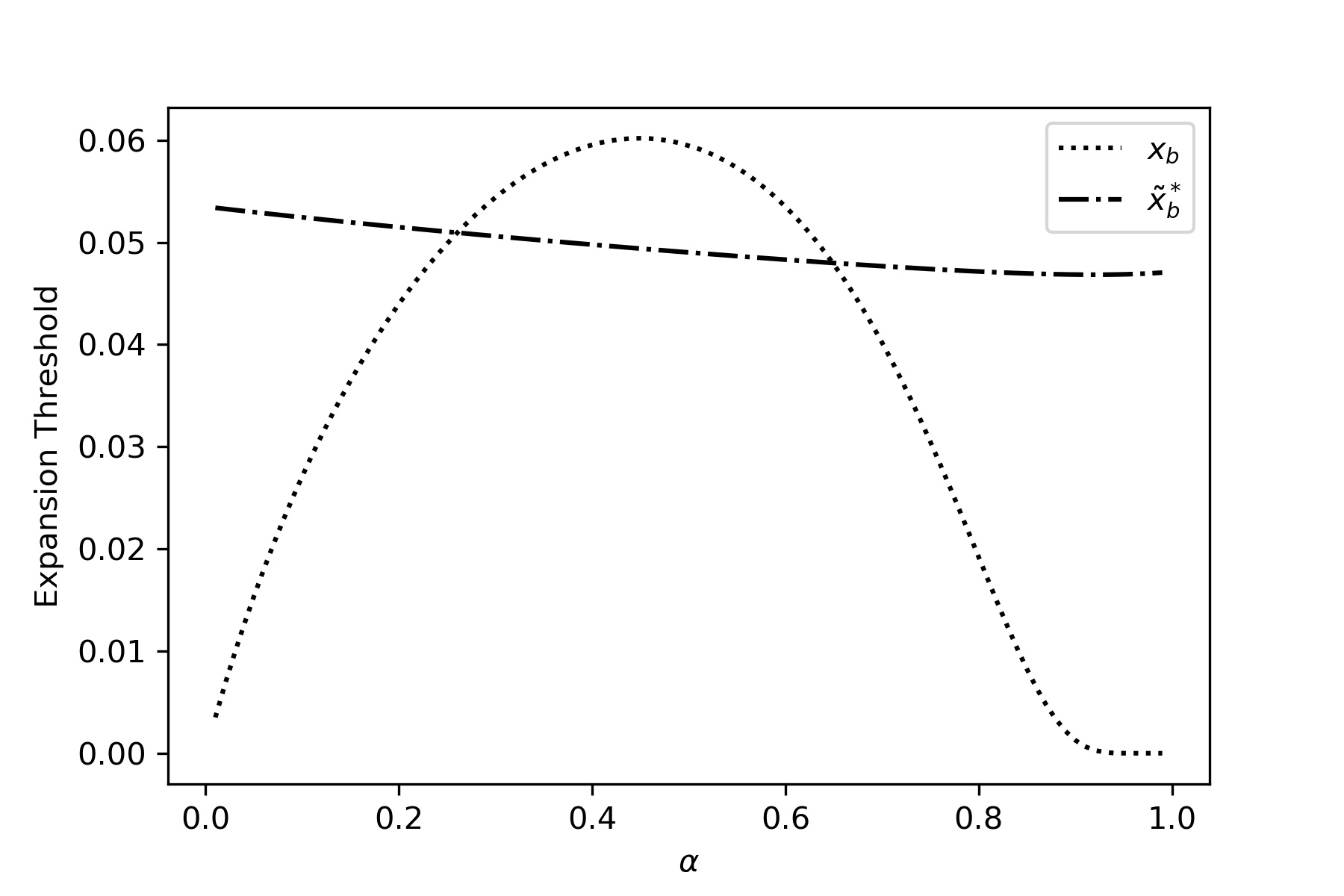}
\caption{Expansion threshold under different values of $\alpha$: single-agent v.s. MFG}
\label{fig: 1vsm-l-aph}
\end{subfigure}
~
\begin{subfigure}[t]{0.48\textwidth}
\includegraphics[width = \textwidth]{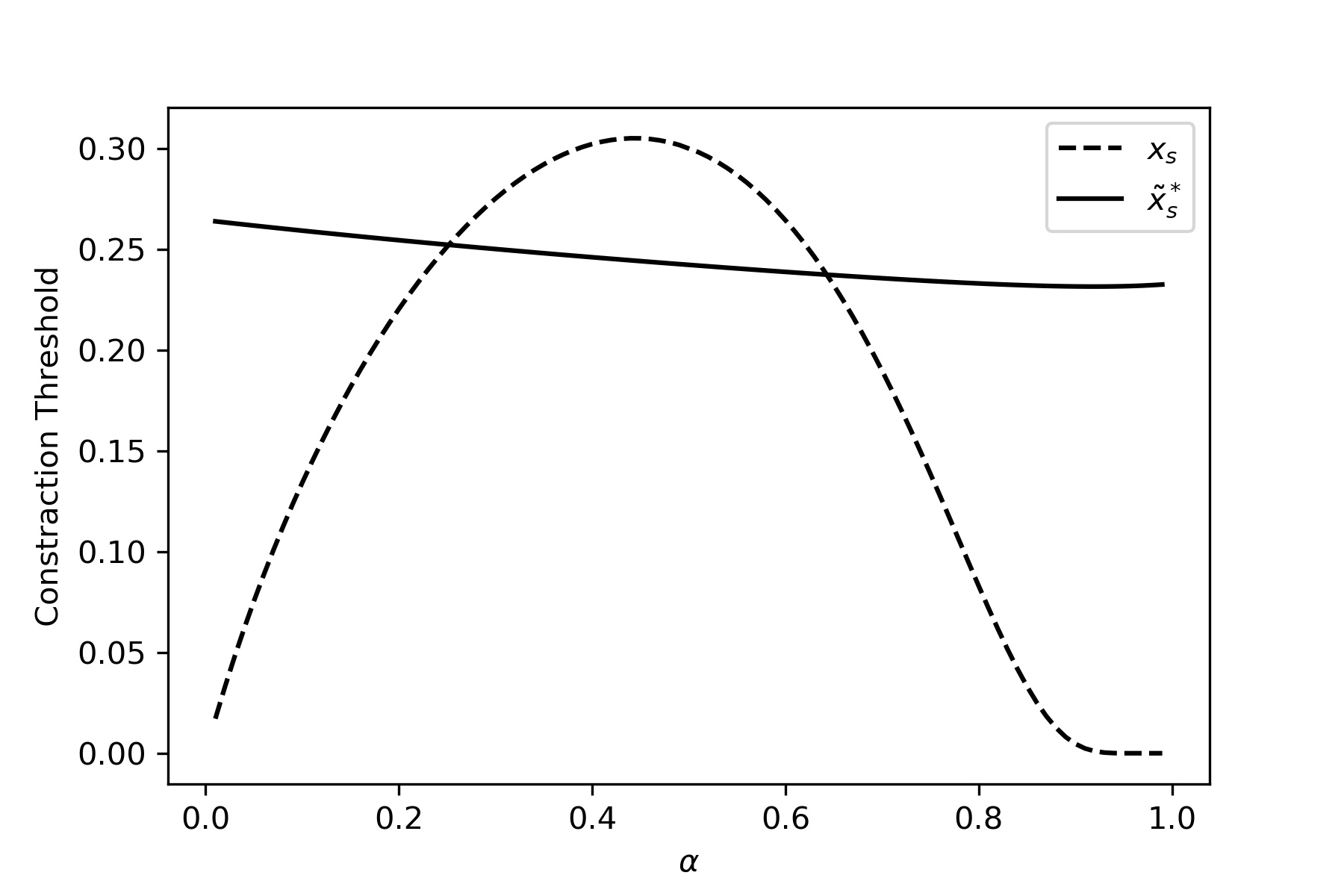}
\caption{Contraction threshold under different values of $\alpha$: single-agent v.s. MFG}
\label{fig: 1vsm-u-aph}
\end{subfigure}
\\
\begin{subfigure}[t]{0.48\textwidth}
\includegraphics[width = \textwidth]{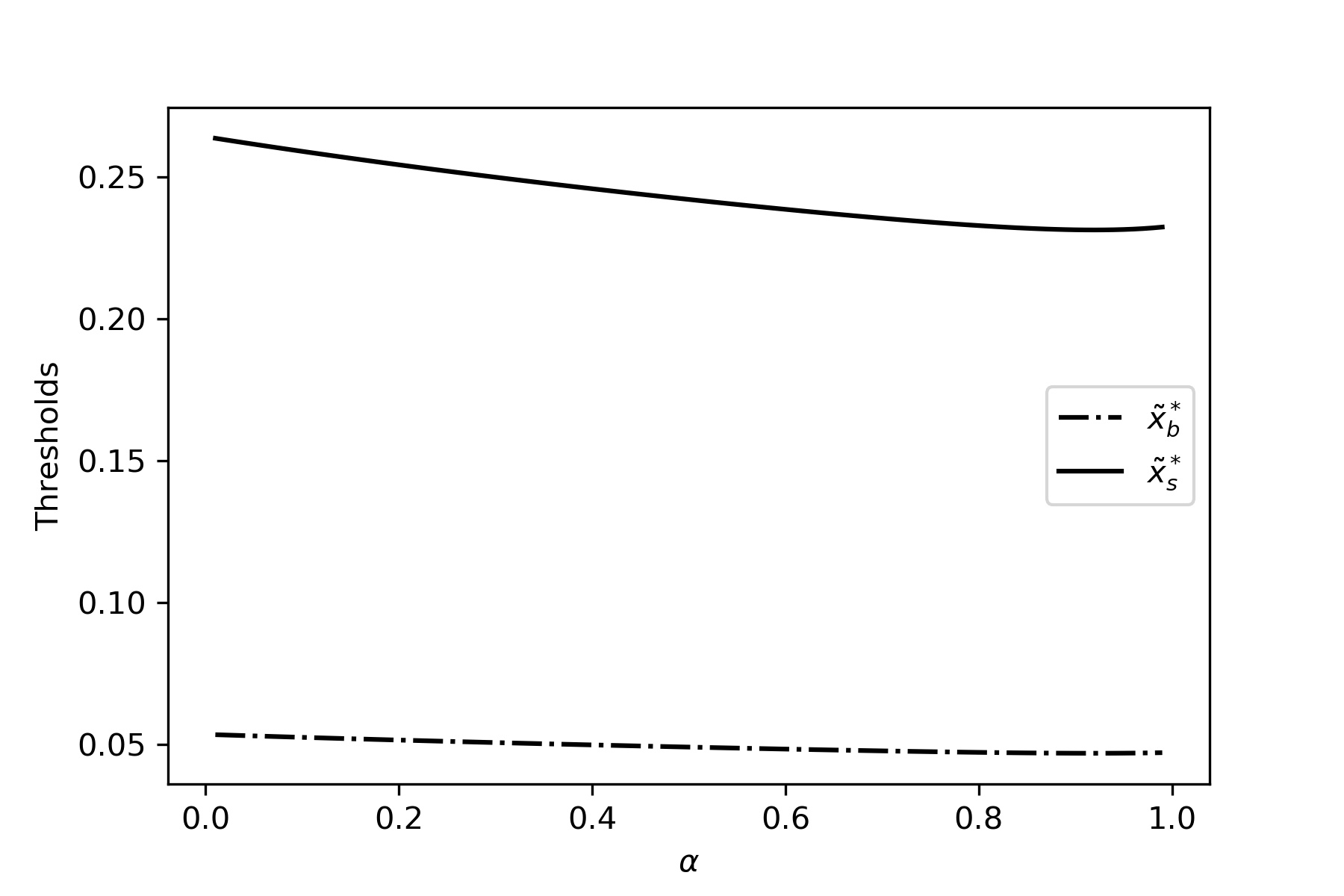}
\caption{MFG optimal thresholds versus $\alpha$}
\label{fig: bdry-aph}
\end{subfigure}
~
\begin{subfigure}[t]{0.48\textwidth}
\includegraphics[width = \textwidth]{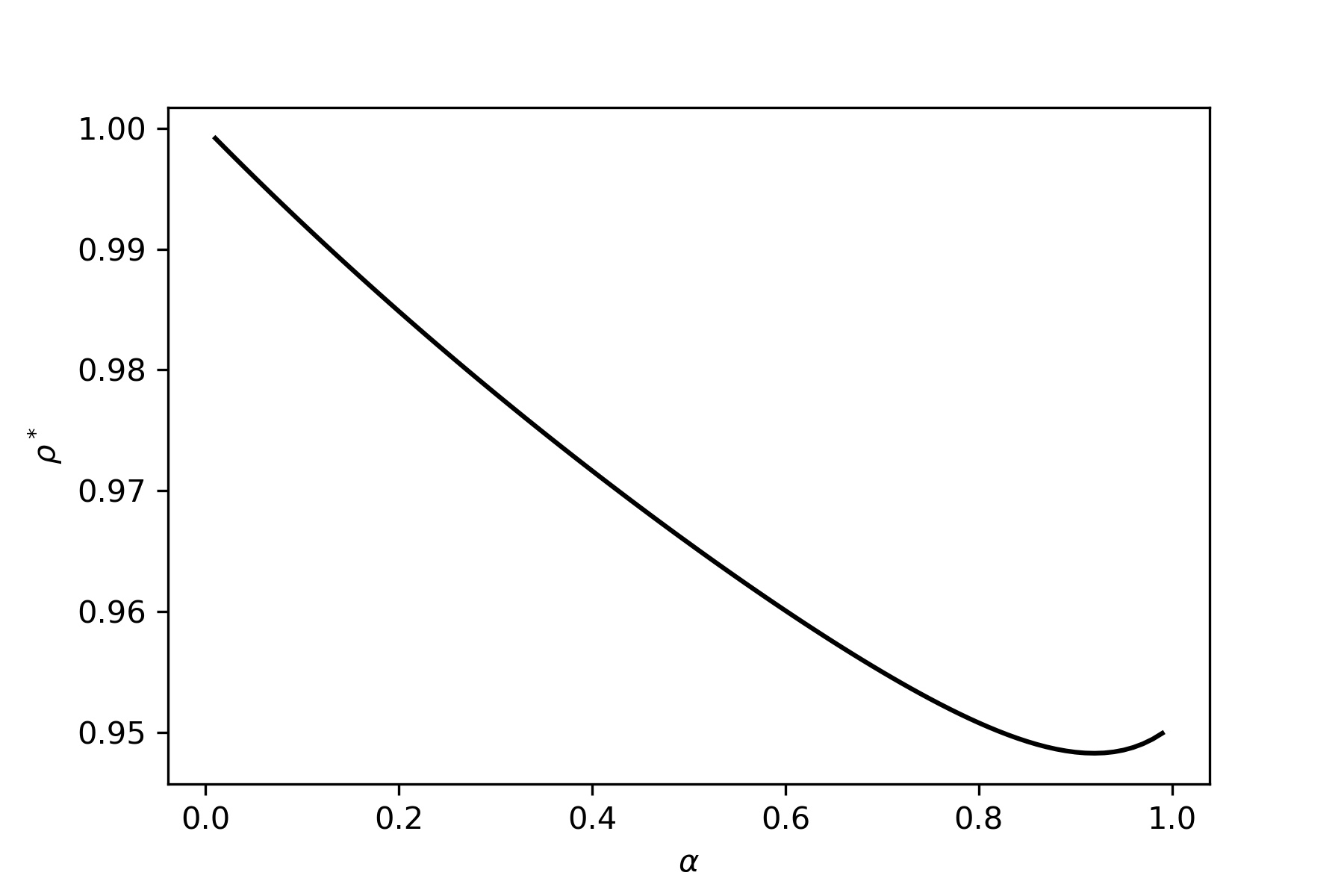}
\caption{Equilibrium price versus $\alpha$}
\label{fig: rho-aph}
\end{subfigure}
\caption{Impact of $\alpha$.}
\label{fig: aph}
\end{figure}

\section{Approximation of $N$-player game}\label{sec:approx}
In Section \ref{ssec: comparison}, we compare the solution to game \eqref{example2} with the solution to the single-agent control problem \eqref{eq:ctrl}, and demonstrate the game effect by analyzing the impact of the model parameters. In this section, we will show that the game \eqref{example2} is an approximation of its associated  $N$-player game, in the sense of $\epsilon$-NE.

Take the filtered probability space $(\Omega, \mf,\mathbb F=\{\mf_t\}_{t\geq0}, \p)$ that supports a standard Brownian motion $W=\{W_t\}_{t\geq0}$. Take $N$ identical copies of the Brownian motion $W$, $W^i=\{W^i_t\}_{t\geq0}$ with $i=1,\dots, N$, such that $W^i$'s are i.i.d. and independent of $W$.

Suppose there are $N$ companies participating in the game of partially reversible investment. For each company $i$, denote $x^i=\{x^i_t\}_{t\geq0}$ as its production level on $\rn$, with initial states $x^i_{0-}\overset{i.i.d.}{\sim}\mu_0\in\mathcal P^2(\rn)$. Similar to \eqref{eq:ad-ctrl-set-1} and considering Remark \ref{rmk:l2}, define the set of admissible controls $\mathcal U^N$ for each company,
\begin{equation}\label{eq:n-adm-ctrl-set}
	\begin{aligned}
		\mathcal{U}^N=&\left\{(\xi^+_\cdot,\xi^-_\cdot):\xi^\pm_\cdot\text{ adapted to }\mathbb F^{(W^1,\dots,W^N)},\,\text{nondecreasing, c\`adl\`ag,}\right.\\
		&\hspace{50pt}\left.\xi^+_{0-}=\xi^-_{0-}=0,\, \eee\left[\int_0^\infty e^{-rt}d\xi^+_t\right]<\infty, \right.\\
		&\hspace{50pt}\left.\text{controlled process }x_t\geq0,\,\forall t\geq0,\,\eee\left[\int_0^\infty e^{-rt}x_t^2dt\right]<\infty\right\},
	\end{aligned}
\end{equation}
where $\mathbb F^{(W^1,\dots,W^N)}=\{\mathcal F^{(W^1,\dots,W^N)}_t\}_{t\geq0}$ is the filtration generated by $(W^1,\dots, W^N)$. For any $\xi^i_\cdot=(\xi^{i,+}_\cdot,\xi^{i,-}_\cdot)\in\mathcal U^N$, assume that the process $x^i=\{x^i_t\}_{t\geq0}$ is driven by
\begin{equation}
    \label{eq:n-dyn}
    dx^{i}_t=x^i_t(\delta dt+\gamma dW^i_t)+d\xi^{i,+}_t-d\xi^{i,-}_t,\quad x^i_{0-}\sim \mu_0.
\end{equation}

Here we consider a similar payoff function for each individual company as in problem \eqref{eq:ctrl}. However, unlike \eqref{eq:ctrl} where the price in the revenue function is exogenously given, here  in the $N$-player game $\rho^i$ the price for company $i$ is assumed to depend on the average of all its opponents' limiting product levels $\frac{\sum_{j-1}x^j_\infty}{N-1}$, and the price is assumes to be determined by the inverse demand function
\[\tilde\rho(x)=a_0-a_1x^{1-\alpha},\]
where $a_0,a_1$ are some positive constants with $a_1$ satisfying \eqref{eq:a1}. 

Under a given set of other companies' controls, $\bfxi^{-i}_\cdot=(\xi^1_\cdot,\dots,\xi^{i-1}_\cdot,\xi^{i+1}_\cdot,\dots,\xi^N_\cdot)$, for any $\bfx\in\rn^N$, the payoff function for company $i$, is given by
\begin{equation}\label{eq:n-payoff}
J^i(\bfx,\xi^i_{\cdot};\bfxi^{-i}_\cdot)=\eee\left[\int_0^\infty e^{-rt}\left[\frac{c(x^i_t)^{\alpha}}{N-1}\sum_{j\neq i}\tilde\rho(x^{j}_\infty)dt-pd\xi^{i,+}_t+p(1-\lambda)d\xi^{i,-}_t\right]\biggl|\bfx_{0-}=\bfx\right],
\end{equation}
where $\bfx_{0-}=(x^1_{0-},\dots,x^N_{0-})$. The objective of company $i$ is to choose the best control policy $\xi^{*,i}\in\mathcal U^N$ to maximize the above payoff. That is,
\begin{equation}\label{eq:n-fv}\tag{N-player}
	\sup_{\xi^{i,+}_\cdot,\xi^{i,-}_\cdot\in\mathcal U^N}J^i(\bfx,\xi^i_\cdot;\bfxi^{-i}_\cdot)
\end{equation}
subject to \eqref{eq:n-dyn}.

There are various solution criteria for an $N$-player game. In this section, we focus on the notion of the Nash equilibrium (NE). 
An NE of an $N$-player game is a set of strategies of all agents from which no players has the incentive to unilaterally deviate. More specifically, 
\begin{defn}[NE]\label{defn:nash}
$\bfxi^*_\cdot=(\xi^{*,1}_\cdot,\dots,\xi^{*,N}_\cdot)$ is called an NE to the game \eqref{eq:n-fv} if for any $i=1,\dots,N$,
\[\eee_{\mu_0}\left[J^i(\bfx_{0-},\xi^{*,i}_\cdot;\bfxi^{*,-i}_\cdot)\right]\geq\eee_{\mu_0}\left[J^i(\bfx_{0-},\xi^{i}_\cdot;\bfxi^{*,-i}_\cdot)\right],\quad \forall \xi^i_\cdot\in\mathcal U^N,\]
where $x^k_{0-}\overset{i.i.d.}{\sim}\mu_0$, $k=1,\dots,N$.
\end{defn}
Solving for such an NE analytically is challenging especially when $N$ is large. We will show that the solution for the game \eqref{example2} in Section \eqref{ssec:exp-soln} provides an approximation of the game \eqref{eq:n-fv} in the following sense.
\begin{defn}[$\epsilon$-NE]\label{defn:eps-nash}
For some $\epsilon>0$, $\bfxi^*_\cdot=(\xi^{*,1}_\cdot,\dots,\xi^{*,N}_\cdot)$ is called an $\epsilon$-NE to the game \eqref{eq:n-fv} if for any $i=1,\dots,N$,
\[\eee_{\mu_0}\left[J^i(\bfx_{0-},\xi^{*,i}_\cdot;\bfxi^{*,-i}_\cdot)\right]\geq\eee_{\mu_0}\left[J^i(\bfx_{0-},\xi^{i}_\cdot;\bfxi^{*,-i}_\cdot)\right]-\epsilon,\quad \forall \xi^i_\cdot\in\mathcal U^N,\]
where $x^k_{0-}\overset{i.i.d.}{\sim}\mu_0$, $k=1,\dots,N$.
\end{defn}

To see the approximation, first recall the definition of a solution $(\xi^{*}_\cdot,\rho^*)$ to the \eqref{example2} given by Definition \ref{soln-smfg} and its explicit form given in Section \ref{ssec:exp-soln} characterized by the pair of reflection boundaries and mean information $(\tilde x^*_b, \tilde x^*_s,\rho^*)$. 

Now, for any company $k=1,\dots,N$, consider the following admissible control policy $\bar\xi^k_\cdot$ characterized by  the reflection boundaries $(\tilde x^*_b, \tilde x^*_s)$ such that for the controlled process $\bar x^k=\{\bar x^k_t\}_{t\geq0}$, $\bar x^k_t\in[\tilde x^*_b, \tilde x^*_s]$ for almost all $t\geq0$. Fix a representative company $i$. Suppose that for any $j\neq i$, company $j$ decides to take the policy $\bar\xi^j_\cdot\in\mathcal U^N$. Then $\bar x^j$'s are i.i.d.; moreover, according to Definition \ref{soln-smfg}, the consistency condition of the solution to the game \eqref{example2} guarantees that $\rho^*=\eee[\tilde\rho(\bar x^j_\infty)]$.

Now denote the set of strategies consisting of $\bar\xi^k_\cdot$'s by the following vector
\begin{equation}\label{eqn:xi}
    \bar\bfxi_\cdot=(\bar\xi^1_\cdot,\dots,\bar\xi^N_\cdot).
    \end{equation}
Then we have
\begin{thm} For any fixed $N$, $\bar\bfxi_\cdot$ given in \eqref{eqn:xi}  is an $\epsilon$-NE to the game \eqref{eq:n-fv} where $\epsilon=O\left(\frac{1}{\sqrt N}\right)$.
\end{thm}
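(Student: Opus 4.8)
The plan is to benchmark every $N$-player payoff against $\eee_{\mu_0}[\tilde v(x_{0-})]$, the value of the control problem \eqref{eq:mfg-ctrl} solved at the equilibrium price $\rho^*$, and to show that conforming to $\bar\bfxi_\cdot$ hits this benchmark exactly whereas no unilateral deviation can exceed it by more than $O(1/\sqrt N)$. Fix company $i$ and suppose each $j\neq i$ plays $\bar\xi^j_\cdot$. Writing $R(\xi^i_\cdot)=c\int_0^\infty e^{-rt}(x^i_t)^\alpha\,dt\ge 0$ for the discounted occupation functional, $C(\xi^i_\cdot)=\int_0^\infty e^{-rt}[p\,d\xi^{i,+}_t-p(1-\lambda)\,d\xi^{i,-}_t]$ for the discounted net intervention cost, and $G=\frac{1}{N-1}\sum_{j\neq i}\tilde\rho(\bar x^j_\infty)$ for the realized average price, the payoff \eqref{eq:n-payoff} factorizes as $J^i(\bfx,\xi^i_\cdot;\bar\bfxi^{-i}_\cdot)=\eee[G\,R(\xi^i_\cdot)-C(\xi^i_\cdot)]$, since $G$ does not depend on $t$.

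First I would handle the conforming choice $\xi^i_\cdot=\bar\xi^i_\cdot$. The reflecting control keeping $\bar x^i_\cdot$ inside $[\tilde x^*_b,\tilde x^*_s]$ is a functional of the data $(x^i_{0-},W^i)$ of company $i$ alone, which is independent of $\{(x^j_{0-},W^j)\}_{j\neq i}$ and hence of $G$; therefore $\eee[G\,R(\bar\xi^i_\cdot)]=\eee[G]\,\eee[R(\bar\xi^i_\cdot)]$. The consistency requirement in Definition \ref{soln-smfg} gives $\eee[G]=\eee[\tilde\rho(\bar x^j_\infty)]=\rho^*$, and since $\bar\xi^i_\cdot$ is optimal for \eqref{eq:mfg-ctrl} at price $\rho^*$ we obtain the exact identity
\[
\eee_{\mu_0}\big[J^i(\bfx_{0-},\bar\xi^i_\cdot;\bar\bfxi^{-i}_\cdot)\big]=\rho^*\,\eee[R(\bar\xi^i_\cdot)]-\eee[C(\bar\xi^i_\cdot)]=\eee_{\mu_0}[\tilde v(x_{0-})].
\]

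For an arbitrary deviation $\xi^i_\cdot\in\mathcal U^N$ I would add and subtract $\rho^*$ in the price,
\[
\eee_{\mu_0}\big[J^i(\bfx_{0-},\xi^i_\cdot;\bar\bfxi^{-i}_\cdot)\big]=\big(\rho^*\,\eee[R(\xi^i_\cdot)]-\eee[C(\xi^i_\cdot)]\big)+\eee\big[(G-\rho^*)R(\xi^i_\cdot)\big].
\]
The bracketed term is the value of $\xi^i_\cdot$ in problem \eqref{eq:mfg-ctrl} at the deterministic price $\rho^*$; since the extra components $\{W^j\}_{j\neq i}$ available to controls in $\mathcal U^N$ are independent of the driver $W^i$, they cannot raise the value of that control problem, so the bracket is $\le\eee_{\mu_0}[\tilde v(x_{0-})]$. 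Combined with the conforming identity this gives
\[
\eee_{\mu_0}\big[J^i(\bfx_{0-},\xi^i_\cdot;\bar\bfxi^{-i}_\cdot)\big]-\eee_{\mu_0}\big[J^i(\bfx_{0-},\bar\xi^i_\cdot;\bar\bfxi^{-i}_\cdot)\big]\le\eee\big[(G-\rho^*)R(\xi^i_\cdot)\big],
\]
and, using $R\ge0$ and Cauchy--Schwarz, the residual is at most $\|G-\rho^*\|_2\,\|R(\xi^i_\cdot)\|_2$. Because $\bar x^j_\infty\in[\tilde x^*_b,\tilde x^*_s]$ makes the i.i.d.\ summands $\tilde\rho(\bar x^j_\infty)$ bounded, $\|G-\rho^*\|_2=\sqrt{\mathrm{Var}(\tilde\rho(\bar x^j_\infty))/(N-1)}=O(1/\sqrt N)$.

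The crux --- and the step I expect to be the main obstacle --- is a uniform bound $\|R(\xi^i_\cdot)\|_2\le K$ with $K$ independent of $N$ and of the deviation, valid over the only controls that can threaten equilibrium, namely those no worse than conforming. Here the standing assumptions should do the work: $\tilde\rho$ decreasing gives $G\le\bar G:=\tilde\rho(\tilde x^*_b)$ almost surely, so any deviation's payoff is dominated by its value in the single-agent problem \eqref{eq:mfg-ctrl} at the constant favorable price $\bar G$, whose value function is finite and whose optimizer confines the state to a fixed bounded box; driving the production level high is then strictly suboptimal, which caps the expansion fuel $\eee[\int_0^\infty e^{-rt}d\xi^{i,+}_t]$ of any profitable deviation uniformly in $N$. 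Feeding this cap into the moment estimates obtained by applying It\^o's formula to $e^{-rt}x^i_t$ and to $e^{-rt}(x^i_t)^2$ --- where the coercivity hypotheses $r>\delta$ and $2\delta+\gamma^2<r$ render the discounted drift strictly dissipative --- bounds $\eee[\int_0^\infty e^{-rt}(x^i_t)^2\,dt]$, and then $x^\alpha\le1+x$ together with Cauchy--Schwarz in time bounds $\|R(\xi^i_\cdot)\|_2$. Tracking constants yields $\eee_{\mu_0}[J^i(\bfx_{0-},\xi^i_\cdot;\bar\bfxi^{-i}_\cdot)]\le\eee_{\mu_0}[J^i(\bfx_{0-},\bar\xi^i_\cdot;\bar\bfxi^{-i}_\cdot)]+O(1/\sqrt N)$, uniformly in $i$ and $\xi^i_\cdot$, which is exactly the asserted $\epsilon$-NE property with $\epsilon=O(1/\sqrt N)$.
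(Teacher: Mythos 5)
Your overall strategy coincides with the paper's: both arguments write the realized average price as $\rho^*$ plus a centered average of $N-1$ i.i.d.\ bounded terms, apply Cauchy--Schwarz to the cross term with the discounted revenue functional, and invoke the optimality of the reflected control for the single-agent problem at the fixed price $\rho^*$. Your handling of the conforming strategy is in fact cleaner than the paper's: the exact identity $\eee_{\mu_0}\left[J^i(\bfx_{0-},\bar\xi^i_\cdot;\bar\bfxi^{-i}_\cdot)\right]=\eee_{\mu_0}[\tilde v(x_{0-})]$, obtained from the independence of $(x^i_{0-},W^i)$ from $G$ together with the consistency condition $\eee[G]=\rho^*$, is a genuine simplification of the chain of approximate inequalities at the end of the paper's proof.

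The step you yourself flag as the crux --- a bound $\|R(\xi^i_\cdot)\|_2\le K$ uniform over the relevant deviations --- is, however, not established, and the sketch does not close. First, a cap on the expansion fuel $\eee\left[\int_0^\infty e^{-rt}d\xi^{i,+}_t\right]$ does not by itself control $\eee\left[\int_0^\infty e^{-rt}(x^i_t)^2dt\right]$: applying It\^o's formula to $e^{-rt}(x^i_t)^2$ produces the singular term $2\eee\left[\int_0^\infty e^{-rt}x^i_{t}\,d\xi^{i,+}_t\right]$ (plus jump corrections), which is dominated by the fuel only if one also controls the level of $x^i$ on the support of $d\xi^{i,+}_t$. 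Second, the preliminary claim that any deviation no worse than conforming has uniformly capped fuel is asserted rather than proved; it requires making quantitative the no-arbitrage structure ($x^i_t\ge0$ and $\lambda\in(0,1)$ forcing the salvage from $\xi^{i,-}$ to be dominated by $(1-\lambda)$ times the purchases plus the growth), and the finiteness of the single-agent value at the favorable price $\bar G$ does not rule out controls whose large revenue and large cost nearly cancel. The paper sidesteps this entire issue by building the requirement $\eee\left[\int_0^\infty e^{-rt}x_t^2\,dt\right]<\infty$ directly into the admissible class $\mathcal U^N$ in \eqref{eq:n-adm-ctrl-set} and applying the resulting bound $L$ only to a single near-optimal deviation $\hat\xi^i_\cdot$ (leaving implicit, as you correctly sense, the question of whether $L$ is uniform in $N$). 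So either import the paper's admissibility condition, in which case your argument goes through at the same level of rigor as the paper's, or complete the fuel-cap and second-moment estimates; as written, the $O\left(\frac{1}{\sqrt N}\right)$ rate is not fully justified.
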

\begin{proof}
It suffice to show that
\[\eee_{\mu_0}\left[J^i(\bfx_{0-},\bar\xi^{i}_\cdot;\bar\bfxi^{-i}_\cdot)\right]\geq\sup_{\xi^i_\cdot\in\mathcal U^N}\eee_{\mu_0}\left[J^i(\bfx_{0-},\xi^{i}_\cdot;\bar\bfxi^{-i}_\cdot)\right]-O\left(\frac{1}{\sqrt N}\right).\]
Note that the strategies of other companies are fixed as $\bar\xi^j_\cdot$, where $j\neq i$. By the continuity of $\tilde\rho(\cdot)$ and boundedness of $\bar x^j$'s, $\bar \rho:=\frac{\sum_{j\neq i}\tilde\rho(\bar x^j_\infty)}{N-1}$ is bounded by a sufficiently large number $R>0$. Therefore for any $\xi^i_\cdot\in\mathcal U^N$,
\[J^i(\bfx,\xi^i_\cdot;\bar\bfxi^{-i}_\cdot)\leq J^{u,i}(\bfx,\xi^i_\cdot;\bar\bfxi^{-i}_\cdot):=\eee\left[\int_0^\infty e^{-rt}\left[cR(x^i_t)^{\alpha}dt-pd\xi^{i,+}_t+p(1-\lambda)d\xi^{i,-}_t\right]\biggl|\bfx_{0-}=\bfx\right].\]
From \cite{Guo2005}, clearly $\sup_{\xi^i\in\mathcal U^N}J^{u,i}(\bfx,\xi^i_\cdot;\bar\bfxi^{-i}_\cdot)$ is finite and
\[U:=\sup_{\xi^i\in\mathcal U^N}J^{u,i}(\bfx,\xi^i_\cdot;\bar\bfxi^{-i}_\cdot)<\infty.\]
For some $d>0$ such that $U-\frac{d}{\sqrt N}>0$, take $\hat\xi^i_\cdot\in\mathcal U^N$ such that $J^i(\bfx,\hat\xi^i_\cdot;\bar\bfxi^{-i}_\cdot)\geq U-\frac{d}{\sqrt N}$ and denote the production level under policy $\hat\xi^i_\cdot$ by $\hat x^i=\{\hat x^i_t\}_{t\geq0}$. According to \eqref{eq:n-adm-ctrl-set}, there exists $L>0$ $\eee_{\mu_0}\left[\int_0^\infty e^{-rt}(\hat x^i_t)^2dt\right]<L$. Consider $\xi^i_\cdot\in\mathcal U^N$ such that the corresponding controlled process $x^i$ satisfies
\begin{equation}\label{eq:l2-bd}
\eee_{\mu_{0}}\left[\int_0^\infty e^{-rt}(x^i_t)^2 dt\right]<L.
\end{equation}
Take such a control policy $\xi^i_\cdot$.
\[\frac{c(x^i_t)^\alpha}{N-1}\sum_{j\neq i}\tilde\rho(\bar x^j_\infty)=c(x^i_t)^\alpha\left\{\rho^*+\frac{\sum_{j\neq i}\left[\tilde\rho(\bar x^j_\infty)-\rho^*\right]}{N-1}\right\}.\]
Since $\bar x^j_\cdot\in[\tilde x^*_b,\tilde x^*_s]$ almost surely, $\tilde\rho(\bar x^i_\infty)$ is also bounded almost surely. For $j\neq i$, $\bar x^j$'s are i.i.d., then
\[\eee_{\mu_{0}}\left|\frac{\sum_{j\neq i}\left[\tilde\rho(\bar x^j_\infty)-\rho^*\right]}{N-1}\right|\leq\eee_{\mu_{0}}\left[\left|\frac{\sum_{j\neq i}\left[\tilde\rho(\bar x^j_\infty)-\rho^*\right]}{N-1}\right|^2\right]^{\frac{1}{2}}=O\left(\frac{1}{\sqrt N}\right).\]
Therefore,
\begin{align*}
&\left|\eee_{\mu_0}\left[\int_0^\infty e^{-rt}\frac{c(x^i_t)^\alpha}{N-1}\sum_{j\neq i}\tilde\rho(\bar x^j_\infty)dt\right]-\eee_{\mu_0}\left[\int_0^\infty e^{-rt}c\rho^*(x^i_t)^\alpha dt\right]\right|\\
&\hspace{20pt}\leq \eee_{\mu_0}\left[\int_0^\infty e^{-rt}c\left|\frac{\sum_{j\neq i}\left[\tilde\rho(\bar x^j_\infty)-\rho^*\right]}{N-1}\right|(x^i_t)^\alpha dt\right]\\
&\hspace{20pt}\leq\eee_{\mu_{0}}\left[\left|\frac{\sum_{j\neq i}\left[\tilde\rho(\bar x^j_\infty)-\rho^*\right]}{N-1}\right|^2\right]^{\frac{1}{2}}\eee_{\mu_0}\left[\left(\int_0^\infty e^{-rt}(x^i_t)^\alpha dt\right)^2\right]^{\frac{1}{2}}\\
&\hspace{20pt}\leq\eee_{\mu_{0}}\left[\left|\frac{\sum_{j\neq i}\left[\tilde\rho(\bar x^j_\infty)-\rho^*\right]}{N-1}\right|^2\right]^{\frac{1}{2}}\eee_{\mu_0}\left[\frac{1}{r}\int_0^\infty e^{-rt}(x^i_t)^{2\alpha} dt\right]^{\frac{1}{2}},
\end{align*}
where the last inequality is by the Jensen inequality. By \eqref{eq:l2-bd}, we have
\begin{align*}
\eee_{\mu_0}\left[\int_0^\infty e^{-rt}(x^i_t)^{2\alpha} dt\right]&=\eee_{\mu_0}\left[\int_0^\infty e^{-rt}(x^i_t)^{2\alpha}\mathbbm 1\{x^i_t\leq1\} dt\right]+\eee_{\mu_0}\left[\int_0^\infty e^{-rt}(x^i_t)^{2\alpha}\mathbbm 1\{x^i_t>1\} dt\right]\\
&\leq r+\eee_{\mu_{0}}\left[\int_0^\infty e^{-rt}(x^i_t)^2 dt\right]\\
&\leq r+L.
\end{align*}
Therefore, 
\[\eee_{\mu_0}\left[\int_0^\infty e^{-rt}\frac{c(x^i_t)^\alpha}{N-1}\sum_{j\neq i}\tilde\rho(\bar x^j_\infty)dt\right]=\eee_{\mu_0}\left[\int_0^\infty e^{-rt}c\rho^*(x^i_t)^\alpha dt\right]+O\left(\frac{1}{\sqrt N}\right).\]
In particular,
\begin{align*}
&\sup_{\xi^i_\cdot\in\mathcal U^N}\eee_{\mu_0}\left[J^i(\bfx_{0-},\xi^{i}_\cdot;\bar\bfxi^{-i}_\cdot)\right]-O\left(\frac{1}{\sqrt N}\right)\leq \eee_{\mu_0}\left[J^i(\bfx_{0-},\hat\xi^{i}_\cdot;\bar\bfxi^{-i}_\cdot)\right]\\
&\hspace{20pt}=\eee_{\mu_0}\left[\int_0^\infty e^{-rt}\left[\frac{c(\hat x^i_t)^{\alpha}}{N-1}\sum_{j\neq i}\tilde\rho(x^{j}_\infty)dt-\gamma^+d\xi^{i,+}_t-\gamma^-d\xi^{i,-}_t\right]\right]\\
&\hspace{20pt}=\eee_{\mu_0}\left[\int_0^\infty e^{-rt}\left[c\rho^*(\hat x^i_t)^{\alpha}dt-\gamma^+d\xi^{i,+}_t-\gamma^-d\xi^{i,-}_t\right]\right]+O\left(\frac{1}{\sqrt N}\right)\\
&\hspace{20pt}\leq \eee_{\mu_0}\left[J^i(\bfx_{0-},\bar\xi^{i}_\cdot;\bar\bfxi^{-i}_\cdot)\right]+O\left(\frac{1}{\sqrt N}\right),
\end{align*}
where the last inequality is due to the optimality of $\bar\xi^i_\cdot$ according to Step 1 of Section \ref{ssec:exp-soln}.
\end{proof}

\section{Conclusion and remarks}

This paper analyzes a class of MFGs with singular controls motivated from the partially reversible problem.
It provides an explicit solution to the MFG, presents sensitivity analysis to compare the solution to the MFG with that of the single-agent control problem, and establishes its approximation to the corresponding $N$-player game in the sense of $\epsilon$-NE, with $\epsilon=O\left(\frac{1}{\sqrt{N}}\right)$.

The natural next step is to study the problem of convergence of the $N$-player game to the associated MFG. Note that this problem  has been studied for  regular controls in \cite{Lacker2018a, Cardaliaguet2018, Nutz2018}. It will be interesting to explore the case when controls are possibly discontinuous. 

Another class of stochastic games with possibly discontinuous controls is impulse control games. Recently there are  progresses in this direction, including \cite{aid2020nonzero} and \cite{campi2020nonzero} for explicit solutions of two-player games and \cite{basei2019nonzero}   showing solutions of  impulse MFGs being $\epsilon$-NE for their corresponding $N$-player impulse games, with $\epsilon=O(\frac{1}{\sqrt{N}})$.  Similar to  MFGs with singular controls, it is challenging to establish general NE structures for impulse games, except for some special cases. The main challenge comes from the non-local operator associated with impulse controls, even with one-dimensional state processes. 

Finally, it is well known that under proper technical conditions, singular controls of  finite variation can be approximated by singular controls of bounded velocity.
See for instance \cite{Hern-Hern2016}. 
More recently, \cite{Dianetti2019} studies in an $N$-player game setting  the convergence of singular control of bounded velocity to that of finite variation, assuming sub-modularity of the cost function and via the notion of {\em weak} NE. An immediate question is whether the convergence relation holds in a MFG framework, and if so, under what form of equilibrium. This is an intriguing question beyond the scope of this paper.

\bibliographystyle{alpha}
\bibliography{library-2}

\newcommand{\etalchar}[1]{$^{#1}$}
\begin{thebibliography}{GKTY11}

\bibitem[AB06]{AB2006}
Rami Atar and Amarjit Budhiraja.
\newblock Singular control with state constraints on unbounded domain.
\newblock {\em The Annals of Probability}, 34(5):1864--1909, 2006.

\bibitem[ABC{\etalchar{+}}20]{aid2020nonzero}
Ren{\'e} A{\"\i}d, Matteo Basei, Giorgia Callegaro, Luciano Campi, and Tiziano
  Vargiolu.
\newblock Nonzero-sum stochastic differential games with impulse controls: a
  verification theorem with applications.
\newblock {\em Mathematics of Operations Research}, 45(1):205--232, 2020.

\bibitem[BC67]{BC67}
John Bather and Herman Chernoff.
\newblock Sequential decisions in the control of a space-ship (finite fuel).
\newblock {\em Journal of Applied Probability}, 4(3):584--604, 1967.

\bibitem[BCG20]{basei2019nonzero}
Matteo Basei, Haoyang Cao, and Xin Guo.
\newblock Nonzero-sum stochastic games with impulse controls.
\newblock {\em Mathematics of Operations Research}, to appear, 2020.

\bibitem[BFY13]{Bensoussan2013}
Alain Bensoussan, Jens Frehse, and Phillip Yam.
\newblock {\em {Mean Field Games and Mean Field Type Control Theory}}.
\newblock SpringerBriefs in Mathematics. Springer New York, New York, NY, 2013.

\bibitem[BS92]{BS92}
David Bridge and Steven Shreve.
\newblock Multi-dimensional finite-fuel singular stochastic control.
\newblock In {\em Applied Stochastic Analysis}, pages 38--58. 1992.

\bibitem[BSW80]{Benes1980}
V{\'a}clav~E. Bene{\v{s}}, Larry~A. Shepp, and Hans~S. Witsenhausen.
\newblock {Some solvable stochastic control problems}.
\newblock {\em Stochastics}, 4(1):39--83, 1980.

\bibitem[BW95]{Browne1995}
Sid Browne and Ward Whitt.
\newblock {Piecewise-linear diffusion processes}.
\newblock In Jewgeni~H. Dshalalow, editor, {\em Advances in Queueing: Theory,
  Methods, and Open Probelms}, chapter~18, pages 463--480. CRC Press, 1995.

\bibitem[CD18a]{Carmonaa}
Ren{\'{e}} Carmona and Fran{\c{c}}ois Delarue.
\newblock {\em {Probabilistic Theory of Mean Field Games with Applications I:
  Mean Field FBSDEs, Control, and Games}}.
\newblock Springer, 2018.

\bibitem[CD18b]{Carmona2018}
Ren{\'{e}} Carmona and Fran{\c{c}}ois Delarue.
\newblock {\em {Probabilistic Theory of Mean Field Games with Applications II:
  Mean Field Games with Common Noise and Master Equations}}.
\newblock Springer, 2018.

\bibitem[CDS20]{campi2020nonzero}
Luciano Campi and Davide De~Santis.
\newblock Nonzero-sum stochastic differential games between an impulse
  controller and a stopper.
\newblock {\em Journal of Optimization Theory and Applications}, pages 1--37,
  2020.

\bibitem[CFS15]{Carmona2015}
Ren{\'{e}} Carmona, Jean-Pierre Fouque, and Li-Hsien Sun.
\newblock {Mean field games and systemic risk}.
\newblock {\em Communications in Mathematical Sciences}, 13(4):911--933, 2015.

\bibitem[CGL17]{Guo2018b}
Haoyang Cao, Xin Guo, and Joon~Seok Lee.
\newblock Approximation of mean field games to n-player stochastic games, with
  singular controls.
\newblock {\em arXiv--1703}, 2017.

\bibitem[CR19]{Cardaliaguet2018}
Pierre Cardaliaguet and Catherine Rainer.
\newblock On the (in) efficiency of {MFG} equilibria.
\newblock {\em SIAM Journal on Control and Optimization}, 57(4):2292--2314,
  2019.

\bibitem[DDP94]{Dixit1994}
Avinash~K. Dixit, Robert~K. Dixit, and Robert~S. Pindyck.
\newblock {\em {Investment under Uncertainty}}.
\newblock Princeton university press, 1994.

\bibitem[DF19]{Dianetti2019}
Jodi Dianetti and Giorgio Ferrari.
\newblock {Nonzero-sum submodular monotone-follower games: existence and
  approximation of Nash equilibria}.
\newblock 2019.

\bibitem[DN90]{davis1990portfolio}
Mark~HA Davis and Andrew~R Norman.
\newblock Portfolio selection with transaction costs.
\newblock {\em Mathematics of operations research}, 15(4):676--713, 1990.

\bibitem[DZ94]{davis1994problem}
Mark~HA Davis and Mihail Zervos.
\newblock A problem of singular stochastic control with discretionary stopping.
\newblock {\em The Annals of Applied Probability}, pages 226--240, 1994.

\bibitem[EKK88]{EK88}
Nicole El~Karoui and Ioannis Karatzas.
\newblock Probabilistic aspects of finite-fuel, reflected follower problems.
\newblock {\em Acta Applicandae Mathematica}, 11(3):223--258, 1988.

\bibitem[FH17]{Fu2017}
Guanxing Fu and Ulrich Horst.
\newblock {Mean field games with singular controls}.
\newblock {\em SIAM Journal on Control and Optimization}, 55(6):3833--3868,
  2017.

\bibitem[GKTY11]{Guo2011}
Xin Guo, Philip Kaminsky, Pascal Tomecek, and Ming Yuen.
\newblock {Optimal spot market inventory strategies in the presence of cost and
  price risk}.
\newblock {\em Mathematical Methods of Operations Research}, 73(1):109--137,
  2011.

\bibitem[GLL11]{Gueant2011}
Olivier Gu{\'{e}}ant, Jean-Michel Lasry, and Pierre-Louis Lions.
\newblock {Mean field games and applications}.
\newblock In {\em Paris-Princeton Lectures on Mathematical Finance 2010}, pages
  205--266. Springer Berlin Heidelberg, 2011.

\bibitem[GP05]{Guo2005}
Xin Guo and Huy{\^{e}}n Pham.
\newblock {Optimal partially reversible investment with entry decision and
  general production function}.
\newblock {\em Stochastic Processes and their Applications}, 115(5):705--736,
  2005.

\bibitem[GX19]{GX2018}
Xin Guo and Renyuan Xu.
\newblock Stochastic games for fuel followers problem: {N} vs {MFG}.
\newblock {\em SIAM Journal on Control and Optimization}, 57(1):659--692, 2019.

\bibitem[HHPY16]{Hern-Hern2016}
Daniel Hern{\'{a}}ndez-Hern{\'{a}}ndez, Jos{\'{e}}~Luis P{\'{e}}rez, and
  Kazutoshi Yamazaki.
\newblock {Optimal of refraction strategies for spectrally negative L\'evy
  processes}.
\newblock {\em SIAM Journal on Control and Optimizations}, 54(3):1126--1156,
  2016.

\bibitem[HMC06]{Huang2006}
Minyi Huang, Roland~P. Malha, and Peter~E. Caines.
\newblock {Large population stochastic dynamic games: closed-loop Mckean-Vlasov
  systems and the Nash certainty equivalence principle}.
\newblock {\em Communications in Information \& Systems}, 6(3):221--252, 2006.

\bibitem[HW87]{HW1987}
J~Michael Harrison and Ruth~J Williams.
\newblock Multidimensional reflected {B}rownian motions having exponential
  stationary distributions.
\newblock {\em The Annals of Probability}, 15(1):115--137, 1987.

\bibitem[JJZ08]{jack2008singular}
Andrew Jack, Timothy~C Johnson, and Mihail Zervos.
\newblock A singular control model with application to the goodwill problem.
\newblock {\em Stochastic processes and their applications},
  118(11):2098--2124, 2008.

\bibitem[Kar83]{Karatzas83}
Ioannis Karatzas.
\newblock A class of singular stochastic control problems.
\newblock {\em Adv. in Appl. Probab.}, 15(2):225--254, 1983.

\bibitem[Lac15]{Lacker2015}
Daniel Lacker.
\newblock {Mean field games via controlled martingale problems: existence of
  Markovian equilibria}.
\newblock {\em Stochastic Processes and their Applications}, 125:2856--2894,
  2015.

\bibitem[Lac18]{Lacker2018a}
Daniel Lacker.
\newblock {On the convergence of closed-loop Nash equilibria to the mean field
  game limit}.
\newblock {\em arXiv preprint arXiv:1808.02745}, 2018.

\bibitem[LL07]{Lasry2007}
Jean-Michel Lasry and Pierre-Louis Lions.
\newblock {Mean field games}.
\newblock {\em Japanese Journal of Mathematics}, 2(1):229--260, 2007.

\bibitem[LZ19]{Lacker2018}
Daniel Lacker and Thaleia Zariphopoulou.
\newblock {Mean field and N-agent games for optimal investment under relative
  performance criteria}.
\newblock {\em Mathematical Finance}, 0(0), 2019.

\bibitem[MZ08]{Merhi2008}
Amal Merhi and Mihail Zervos.
\newblock {A model for reversible investment capacity expansion}.
\newblock {\em SIAM Journal on Control and Optimization}, 46(3):839--876, 2008.

\bibitem[NSMT20]{Nutz2018}
Marcel Nutz, Jaime San~Martin, and Xiaowei Tan.
\newblock Convergence to the mean field game limit: a case study.
\newblock {\em The Annals of Applied Probability}, 30(1):259--286, 2020.

\bibitem[SS89]{SS1989}
Steven~E Shreve and H~Mete Soner.
\newblock Regularity of the value function for a two-dimensional singular
  stochastic control problem.
\newblock {\em SIAM Journal on Control and Optimization}, 27(4):876--907, 1989.

\bibitem[SS94]{Shreve1994}
Steven~E Shreve and H~Mete Soner.
\newblock {Optimal investment and consumption with transaction costs}.
\newblock {\em The Annals of Applied Probability}, 4:609--692, 1994.

\bibitem[SS95]{Shepp1995}
Larry~A Shepp and Albert~N Shiryaev.
\newblock {A new look at pricing of the ``Russian option''}.
\newblock {\em Theory of Probability \& Its Applications}, 39(1):103--119,
  1995.

\bibitem[Ste12]{Steg2012}
Jan~Henrik Steg.
\newblock {Irreversible investment in oligopoly}.
\newblock {\em Finance and Stochastics}, 16(2):207--224, 2012.

\bibitem[VW85]{VW1985}
Srinivasa~RS Varadhan and Ruth~J Williams.
\newblock Brownian motion in a wedge with oblique reflection.
\newblock {\em Communications on Pure and Applied Mathematics}, 38(4):405--443,
  1985.

\end{thebibliography}
\end{document}